\documentclass[11pt]{article}

\usepackage[utf8]{inputenc}
\usepackage{epsfig}
\usepackage{amsmath}
\usepackage{amsfonts}
\usepackage{amsthm}
\usepackage{color}

\oddsidemargin=0cm  
\evensidemargin=0cm  
\textwidth=6.5in    
\textheight=8.5in   
\topmargin=-.3in   

        \theoremstyle{plain}
        
        \newtheorem{proposition}{Proposition}[section]

        \theoremstyle{remark}
        
        \theoremstyle{remark}
        
        \theoremstyle{remark}




\newcommand{\demi}{\frac{1}{2}}

\newcommand{\tdemi}{\textstyle \frac{1}{2}}
\newcommand{\troisdemi}{\textstyle \frac{3}{2}}

\newcommand{\R}{{\mathbb{R}}}

\newcommand{\dt}{\partial_t}

\newcommand{\divv}{\nabla_v \cdot}

\newcommand{\D}{D}
\renewcommand{\L}{L}
\newcommand{\cint}[1]{\left\langle #1\right\rangle}

\newcommand{\eps}{\varepsilon}
\renewcommand{\Pr}{{\rm Pr}}


\begin{document}


\begin{center}

{\bf A Fokker-Planck model of the Boltzmann equation with correct
  Prandtl number }

\vspace{1cm}
J. Mathiaud$^1$,  L. Mieussens$^2$

\bigskip
$^1$CEA-CESTA\\
15 avenue des sabli\`eres - CS 60001\\
33116 Le Barp Cedex, France\\
{ \tt(julien.mathiaud@cea.fr)}\\
Fax number: (33)557045433\\
Phone number: (33)557046872

\bigskip
$^2$Univ. Bordeaux, IMB, UMR 5251, F-33400 Talence, France.\\
CNRS, IMB, UMR 5251, F-33400 Talence, France. \\
INRIA, F-33400 Talence, France. \\

{ \tt(Luc.Mieussens@math.u-bordeaux1.fr)}

\end{center}

Abstract: We propose an extension of the Fokker-Planck model of the
Boltzmann equation to get a correct Prandtl number in the Compressible
Navier-Stokes asymptotics. This is obtained by replacing the diffusion
coefficient (which is the equilibrium temperature) by a non diagonal
temperature tensor, like the Ellipsoidal-Statistical model (ES) is
obtained from the Bathnagar-Gross-Krook model (BGK) of the Boltzmann
equation. Our model is proved to satisfy the properties of
conservation and a H-theorem. A Chapman-Enskog analysis and two
numerical tests show that a correct Prandtl number of $\frac23$ can be
obtained.

\bigskip

Keywords: Fokker-Planck model, Ellipsoidal-Statistical model,
H-theorem, Prandtl number

\section{Introduction}

Numerical simulations of rarefied gas flows are fundamental tools to
study the behavior of a gas in a system which length is of same order
of magnitude as the mean free path of the gas molecules. For instance,
these simulations are used in aerodynamics to estimate the heat flux
at the wall of a re-entry space vehicle at high altitudes; numerical
simulations are also used to estimate the attenuation of a
micro-accelerometer by the surrounding gas in a
micro-electro-mechanical system; a last example is when one wants to
estimate the pumping speed or compression rate of a turbo-molecular pump. 

Depending on the problem, the flow can be in non-collisional,
rarefied, or transitional regime, and several simulation methods
exist. The most common method is the Direct Simulation Monte Carlo
method (DSMC) proposed by Bird~\cite{bird}: it is a stochastic method that simulate the
flow with macro particles that mimic the real particles with transport
and collisions. It works well for rarefied flows, but can be less
efficient what the flow is close to continuous regime (even if there
are other version of DSMC that are designed to work well in this
regime: see for instance~\cite{RHW_2011,DP_2014}). For
non-collisional regimes, the Test Particle Monte Carlo (TPMC) method
is very efficient and is a common tool for vacuum pumps~\cite{Davis_TPMC}. For
transition regimes, there exist mature deterministic solvers based on a direct
approximation of the Boltzmann equation that can be
used~\cite{Titarev_2012,BCHM_2014,CA_2011,CXLC_2012,KAAFZ,WRZ_2014b}. We refer to~\cite{luc_rgd2014} or further references on
these solvers. Among these solvers, some of them are based on model
kinetic equations (BGK, ES-BGK, Shakhov): while these models keep the
accuracy of the Boltzmann equation in the transition regime, they are
much simpler to be discretized and give very efficient
solvers~\cite{LZ_2009,CA_2011,Titarev_2012,CXLC_2012,BCHM_2014}. These models simply replace the
Boltzmann collision operator by a relaxation operator toward some
equilibrium distribution, but still retain the elementary properties
of the Boltzmann operator (collisional invariants, Maxwellian equilibrium). Two of them (BGK
and ES-BGK) also satisfy the important entropy dissipation property
(the so-called ``H-theorem'')~\cite{ALPP}.

Very recently, in a series of
paper~\cite{Jnny2010,Grj2011,Grj2012,Grj2013}, Jenny et al. proposed
a very different and innovative approach: they proposed to use a
different model equation known as the Fokker-Planck model to design a
rarefied flow solver. In this model, the collisions are taken into
account by a diffusion process in the velocity space. Like the model
equations mentioned above, this equation also satisfy the main
properties of the Boltzmann equation, even the H-theorem. Instead of
using a direct discretization of this equation, the authors used the
equivalent stochastic interpretation of this equation (the Langevin
equations for the position and velocity of particles) that are
discretized by a standard stochastic ordinary differential equation
numerical scheme. This approach turned out to be very efficient, in
particular in the transition regime, since it is shown to be
insensitive to the number of simulated particles, as opposed to the
standard DSMC. However, this Fokker-Planck model is parametrized by a
single parameter, like the BGK model, and hence cannot give the
correct transport coefficients in near equilibrium regimes: this is
often stated by showing that the Prandtl number--which the ration of
the viscosity to the heat transfer coefficient--has an incorrect
value. The authors have proposed a modified model that allow to fit
the correct value of the Prandtl number~\cite{Grj2011}, and then have
extended it to more complex flows (multi-species~\cite{Grj2012} and
diatomic~\cite{Grj2013}). However, even if the results obtained with
this approach seem very accurate, it is not clear that these models
still satisfy the H-theorem.

In this paper, we propose another kind of modification of
the Fokker-Planck equation to get a correct Prandtl number: roughly
speaking, the diffusion coefficient (which is the equilibrium
temperature) is replaced by a non diagonal temperature tensor. This
approach it is closely related to the way the BGK model is extended to
the ES-BGK model, and we call our model the ES-Fokker Planck (ES-FP)
model. Then, we are able to prove that this model satisfies the
H-theorem. For illustration, we also show numerical experiments that
confirm our analysis, for a space homogeneous problem.

The outline of this paper is the following. In
section~\ref{sec:FPmodel}, our model and is properties are proved,
including the H-theorem. A Chapman-Enskog analysis is made in
section~\ref{sec:CE} to compute the transport coefficient obtained with
this model. In section~\ref{sec:varnu}, we show that our model can
include a variable parameter that allow to fit the correct Prandtl
number. Our numerical tests are shown in section~\ref{sec:num}, and
conclusions and perspectives are presented in section~\ref{sec:concl}.

\section{The Fokker-Planck model}
\label{sec:FPmodel}

   \subsection{The Boltzmann equation}
   We consider a gas described by the mass density of particles
   $f(t,x,v)$ that at time $t$ have the position $x$ and the velocity
   $v$ (note that both position $x$ and velocity $v$ are scalar). The
   corresponding macroscopic quantities are $(\rho,\rho u, E) =
   \cint{(1,v,\tdemi |v|^2) f}$, where $\rho$, $\rho u$, and $E$ are
   the mass, momentum, and energy densities, and $\cint{\phi}=\int_{\R^3}
   \phi(v)\, dv$ for any velocity dependent function. The temperature
   $T$ of the gas is defined by relation $E=\tdemi \rho |u|^2+\frac{3}{2}
   \rho R T$, where $R$ is the gas constant, and the pressure is
   $p=\rho R T$.

The evolution of the gas is governed by the following Boltzmann equation
\begin{equation}\label{Bo}
 \dt f+v\cdot\nabla_x f = Q(f,f),
\end{equation}
with
$$ Q(f,f)(v) = \int_{v_*\in\R^3}\int_{\sigma\in S^2} \bigg( f(v'_*)\,f(v')- f(v_*)\,f(v) \bigg) \, r^2\, |v-v_*|\, d\sigma\,dv_* , $$
and
$$ v' = \frac{v+v_*}2 + \frac{|v-v_*|}2\,\sigma, $$
$$ v'_* = \frac{v+v_*}2 - \frac{|v-v_*|}2\,\sigma. $$

It is well known that this operator conserves the mass, momentum, and
energy, and that the local entropy $H(f)=\cint{f\log f}$ is locally
non-increasing. This means that the effect of this operator is to make 
the distribution $f$ relax towards its own local
Maxwellian distribution, which is defined by
\begin{equation*}
  M(f)(v)=\frac{\rho}{\left(2\pi RT\right)^{3/2}}\exp\left(\frac{|v-u|^{2}}{2RT}\right).
\end{equation*}

For the sequel, it is useful to define another macroscopic quantity,
which is not conserved: the temperature tensor, defined by 
\begin{equation}\label{eq-defTheta} 
   \Theta:=\frac1\rho\cint{(v-u)\otimes(v-u)f}.
\end{equation}
In an equilibrium state (that is to say when $f=M(f)$), $\Theta$
reduces to the isotropic tensor $RT I$.

   \subsection{The ES-Fokker Planck model}
The standard Fokker-Planck model for the Boltzmann equation is
\begin{equation}  \label{eq-FP}
\dt f+v\cdot\nabla_x f = \frac{1}{\tau}\divv\bigl((v-u)f+RT\nabla_v f\bigr),
\end{equation}
see~\cite{cercignani}. Our model is obtained in the same spirit as the
ES model is obtained from a modification of the BGK equation: the
temperature that appears in~(\ref{eq-FP}), as a diffusion coefficient, 
is replaced by a tensor $\Pi$ so that we obtain
\begin{equation}  \label{eq-es_fpl}
\dt f+v\cdot\nabla_x f = \D(f),
\end{equation}
where the collision operator is defined by
\begin{equation}  \label{eq-D}
D(f)=\frac{1}{\tau}\divv\bigl((v-u)f+{\Pi}\nabla_v f\bigr),
\end{equation}
where $\tau$ is a relaxation time, and $\Pi$ is a convex combination between the temperature tensor
$\Theta$ and its equilibrium value $RT I$, that is to say: 
\begin{equation}\label{eq-defPi} 
  \Pi=(1-\nu)RT I+\nu\Theta,
\end{equation}
with $\nu$ a parameter. According to~\cite{ALPP}, $\Pi$ is symmetric
positive definite if $\nu\in]-\demi,1]$. In fact, this condition is
too restrictive and we have the following result.
\begin{proposition} [Condition of definite positiveness of $\Pi$] \label{prop:defpos}

\

  The tensor $\Pi$ is symmetric positive definite for every tensor
  $\Theta$ if, and only if, 
\begin{equation} 
  -\frac{RT}{\lambda_{max}-RT}< \nu < \frac{RT}{RT-\lambda_{min}}, \label{positif}
\end{equation}
where $\lambda_{max}$ and $\lambda_{min}$ are the (positive) maximum and minimum
eigenvalues of $\Theta$.
Moreover $\Pi$  is positive definite independently of the eigenvalues of $\Theta$ as long as:
\begin{equation} 
  -\frac{1}{2}< \nu < 1, \label{positif2}
\end{equation}
\end{proposition}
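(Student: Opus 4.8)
The plan is to reduce the whole statement to elementary facts about the eigenvalues of $\Theta$. Since $\Theta$ is symmetric and $I$ is symmetric, $\Pi=(1-\nu)RT\,I+\nu\Theta$ is automatically symmetric, so symmetry costs nothing and the entire content is the positive definiteness. First I would diagonalize $\Theta$ in an orthonormal eigenbasis, writing its (positive) eigenvalues as $\lambda_1,\lambda_2,\lambda_3$. Because $I$ is diagonal in every basis, $\Pi$ is diagonal in the \emph{same} basis, with eigenvalues $\mu_i=(1-\nu)RT+\nu\lambda_i=RT+\nu(\lambda_i-RT)$. A symmetric matrix is positive definite if and only if all its eigenvalues are positive, so the claim is equivalent to the three scalar conditions $\mu_i>0$.

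Next I would solve each inequality $RT+\nu(\lambda_i-RT)>0$ for $\nu$, splitting on the sign of $\lambda_i-RT$. When $\lambda_i=RT$ the inequality reads $RT>0$ and imposes nothing. When $\lambda_i>RT$ it is equivalent to the lower bound $\nu>-RT/(\lambda_i-RT)$; since $\lambda\mapsto -RT/(\lambda-RT)$ is increasing on $(RT,\infty)$, the strongest such bound comes from the largest eigenvalue $\lambda_{max}$. When $\lambda_i<RT$ it is equivalent to the upper bound $\nu<RT/(RT-\lambda_i)$; since $\lambda\mapsto RT/(RT-\lambda)$ is increasing on $(0,RT)$, the most restrictive such bound comes from the smallest eigenvalue $\lambda_{min}$. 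Intersecting the three constraints leaves exactly $-RT/(\lambda_{max}-RT)<\nu<RT/(RT-\lambda_{min})$, which is (\ref{positif}); conversely any $\nu$ in this interval makes every $\mu_i>0$, giving the equivalence.

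Finally, for (\ref{positif2}) I would bring in the physical normalization of the temperature tensor. Taking the trace in (\ref{eq-defTheta}) and using the definition of $T$ through $E=\tdemi\rho|u|^2+\troisdemi\rho RT$ gives $\mathrm{tr}\,\Theta=\frac1\rho\cint{|v-u|^2 f}=3RT$. Because the three eigenvalues are positive and sum to $3RT$, each satisfies $0<\lambda_i<3RT$, hence $0<\lambda_{min}\le RT\le\lambda_{max}<3RT$. Feeding these into the endpoints of (\ref{positif}) yields $-RT/(\lambda_{max}-RT)<-\demi$ and $RT/(RT-\lambda_{min})>1$, so the fixed interval $-\demi<\nu<1$ sits inside the admissible interval whatever the eigenvalues are; this proves (\ref{positif2}).

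The only real work is bookkeeping. The main obstacle is the correct treatment of the endpoint and degenerate cases: verifying the monotonicity of the two bounding functions so that $\lambda_{max}$ and $\lambda_{min}$ are genuinely the binding constraints, noting that in the isotropic case $\Theta=RT\,I$ the two bounds degenerate to $\pm\infty$ (no constraint), and checking that it is precisely the \emph{strict} inequality $\lambda_{max}<3RT$ — which relies on the other eigenvalues being strictly positive — that produces the clean constant $-\demi$. Establishing the trace identity $\mathrm{tr}\,\Theta=3RT$ is the one step where the physics of the model, rather than pure linear algebra, is needed.
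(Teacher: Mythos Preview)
Your proof is correct and follows essentially the same route as the paper: simultaneous diagonalization of $\Theta$ and $\Pi$, reduction to the scalar inequalities $(1-\nu)RT+\nu\lambda_i>0$, and then use of the trace constraint $\mathrm{tr}\,\Theta=3RT$ together with $\lambda_i>0$ to get the uniform bounds $-\tfrac12<\nu<1$. If anything you are more careful than the paper, which jumps directly from the scalar inequalities to~(\ref{positif}) without spelling out the monotonicity argument or the degenerate isotropic case.
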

\begin{proof}
  Since $\Theta$ is symmetric and positive definite, it is
  diagonalizable and its eigenvalues are positive. Moreover, because of the
  definition of $\Pi$, both $\Theta$ and $\Pi$ are diagonalizable in
  the same orthogonal basis. Then there exists an invertible
  orthogonal matrix $P$ such that
$$P\Theta P^{T}=\left(\begin{array}{ccc}
\lambda_1&0&0\\
0&\lambda_2&0\\
0&0&\lambda_3\\
\end{array}\right),$$ 
and
$$P\Pi P^{T}=\left(\begin{array}{ccc}
(1-\nu)RT+\nu\lambda_1&0&0\\
0&(1-\nu)RT+\nu\lambda_2&0\\
0&0&(1-\nu)RT+\nu\lambda_3\\
\end{array}\right).
$$ 
Clearly $\Pi$ is  positive definite if and only if $\,
(1-\nu)RT+\nu\lambda_i>0$ for every $i$, which leads to:
\begin{equation*} 
  -\frac{RT}{\lambda_{max}-RT}< \nu < \frac{RT}{RT-\lambda_{min}},
\end{equation*}
which is~(\ref{positif}).
At worse $\lambda_{min}$ is equal to zero so that  
\begin{equation*} 
  \nu < \frac{RT}{RT-0}=1.
\end{equation*}
At worse $\lambda_{max}$ is equal to $Trace(\Theta)=3RT$ so that  
\begin{equation*} 
  \nu > -\frac{RT}{3RT-RT}=-\frac{1}{2}.
\end{equation*}
On the contrary when $\lambda_{max}$  tends to $RT$ near Maxwellian equilibrium  (note that $\lambda_{min} $ also tends to $T$  in this case), $\nu$ becomes unbounded.
The last two inequalities provide inequalities~(\ref{positif2}).

\end{proof}

\begin{proposition} \label{prop:formulations}
  The operator $\D$ has two other equivalent formulations:
\begin{equation}  \label{eq-D1}
\D(f) = \frac{1}{\tau}\divv \left( \Pi G(f)\nabla_v \frac{f}{G(f)} \right),
\end{equation}
and
\begin{equation}  \label{eq-D2}
\D(f) = \frac{1}{\tau}\divv \left( \Pi f\nabla_v \log \left( \frac{f}{G(f)}\right) \right),
\end{equation}
where $G(f)$ is the anisotropic Gaussian defined by
\begin{equation}  \label{eq-G}
  G(f)=\frac{\rho}{\sqrt{\det(2\pi\Pi)}}\exp\left(-\frac{(v-u)\Pi^{-1}(v-u)}{2}\right),
\end{equation}
which has the same 5 first moments as $f$
\begin{equation*}
  \cint{(1,v,\tdemi|v|^2)G(f)} = (\rho,\rho u, E),
\end{equation*}
and has the temperature tensor $\cint{(v-u)\otimes(v-u)G(f)}=\Pi$.
\end{proposition}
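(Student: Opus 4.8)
The plan is to reduce both proposed formulations to the defining flux in~(\ref{eq-D}), by first computing the velocity gradient of the anisotropic Gaussian $G(f)$. Taking the logarithm of~(\ref{eq-G}) gives $\log G(f)=C-\tdemi(v-u)\Pi^{-1}(v-u)$, where $C$ is independent of $v$. Since $\Pi$, and hence $\Pi^{-1}$, is symmetric (guaranteed by Proposition~\ref{prop:defpos} in the admissible range of $\nu$), differentiating the quadratic form yields $\nabla_v\log G(f)=-\Pi^{-1}(v-u)$, so that $\Pi\nabla_v\log G(f)=-(v-u)$ and $\Pi\nabla_v G(f)=-(v-u)G(f)$. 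This single identity is the engine of the whole proof.

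Second, I would establish~(\ref{eq-D2}). Writing $\log(f/G(f))=\log f-\log G(f)$ and inserting the identity above gives
$$\Pi f\,\nabla_v\log\frac{f}{G(f)}=\Pi f\Bigl(\frac{\nabla_v f}{f}+\Pi^{-1}(v-u)\Bigr)=\Pi\nabla_v f+(v-u)f,$$
which is exactly the flux inside~(\ref{eq-D}); applying $\frac{1}{\tau}\divv$ recovers $\D(f)$. For~(\ref{eq-D1}), the quotient rule gives $G(f)\nabla_v(f/G(f))=\nabla_v f-f\,\nabla_v\log G(f)=\nabla_v f+f\Pi^{-1}(v-u)$, so multiplying by $\Pi$ produces the same flux $\Pi\nabla_v f+(v-u)f$. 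Equivalently, one may observe the clean relation $f\,\nabla_v\log(f/G(f))=G(f)\nabla_v(f/G(f))$, which makes the two formulations manifestly identical without any further computation.

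Third, I would verify the moment statements for $G(f)$. Via the change of variables $w=\Pi^{-1/2}(v-u)$, the tensor Gaussian factorizes into standard one-dimensional Gaussians; the normalization $\rho/\sqrt{\det(2\pi\Pi)}$ is chosen precisely so that $\cint{G(f)}=\rho$, the Jacobian $\sqrt{\det\Pi}$ cancelling against the constant. Oddness in $w$ gives $\cint{(v-u)G(f)}=0$, hence $\cint{v\,G(f)}=\rho u$, while the second-moment integral returns $\cint{(v-u)\otimes(v-u)G(f)}=\Pi$. Decomposing $|v|^2=|v-u|^2+2(v-u)\cdot u+|u|^2$ and using $\mathrm{Trace}(\Pi)=3RT$ (which follows from~(\ref{eq-defPi}) together with $\mathrm{Trace}(\Theta)=3RT$, independently of $\nu$) yields $\cint{\tdemi|v|^2G(f)}=\tdemi\rho|u|^2+\tfrac32\rho RT=E$, so $G(f)$ indeed shares the first five moments of $f$.

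There is no genuine obstacle here: the statement is a direct computation. The only points requiring care are the consistent use of the symmetry of $\Pi^{-1}$ when differentiating the quadratic form $(v-u)\Pi^{-1}(v-u)$ (so that the factor of two is handled correctly and one arrives at $-\Pi^{-1}(v-u)$ rather than twice that), and the matrix change of variables $w=\Pi^{-1/2}(v-u)$ in the moment integrals, where the Jacobian must be tracked so that it cancels against the normalization constant.
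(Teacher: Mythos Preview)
Your proof is correct and follows exactly the same route as the paper: the key identity $\nabla_v G(f)=-\Pi^{-1}(v-u)G(f)$ (equivalently $\nabla_v\log G(f)=-\Pi^{-1}(v-u)$) is used to reduce both fluxes to $(v-u)f+\Pi\nabla_v f$, and the moment identities are dispatched as standard Gaussian integrals. You simply spell out in detail what the paper compresses into two sentences.
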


\begin{proof}
Relations~(\ref{eq-D1}) and~(\ref{eq-D2}) are simple consequences of the relation $\nabla_v G(f)
= -\Pi^{-1}(v-u)G(f)$. The moments of $G(f)$ are given by standard
Gaussian integrals.
\end{proof}

Now, we state that $D$ has the same conservation and entropy
properties as the Boltzmann collision operator $Q$. 
\begin{proposition} \label{prop:propD}
We assume $\nu$ satisfies~(\ref{positif}) and that $\nu<1$.
  The operator $D$ conserves the mass, momentum, and energy: 
\begin{equation}\label{eq-consD} 
  \cint{(1,v,\tdemi|v|^2)\D(f)} = 0,
\end{equation}
it satisfies the dissipation of the entropy:
\begin{equation*}
  \cint{D(f)\log f}\leq 0,
\end{equation*}
and we have the equilibrium property:
\begin{equation*}
  \D(f) = 0 \Leftrightarrow f= G(f) \Leftrightarrow f=  M(f).
\end{equation*}
\end{proposition}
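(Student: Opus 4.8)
The plan is to treat the three assertions in turn, integrating by parts in $v$ and using the positive definiteness of $\Pi$ guaranteed by Proposition~\ref{prop:defpos}. Throughout I use that the flux $J=(v-u)f+\Pi\nabla_v f$, together with $f$ and its derivatives, vanishes at infinity, so all boundary terms drop.

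For conservation I would test $\D(f)=\frac1\tau\divv J$ against $1$, $v$ and $\tdemi|v|^2$. Against $1$ the integral is that of a pure divergence, hence zero. Against $v$, one integration by parts gives $\cint{v\,\D(f)}=-\frac1\tau\cint{J}=-\frac1\tau\bigl(\cint{(v-u)f}+\Pi\cint{\nabla_v f}\bigr)$, and both terms vanish by the definition of $u$ and because $\cint{\nabla_v f}=0$. Against $\tdemi|v|^2$, integration by parts yields $-\frac1\tau\cint{v\cdot J}$, which reduces to $-\frac{\rho}{\tau}\bigl(\mathrm{Tr}\,\Theta-\mathrm{Tr}\,\Pi\bigr)$; the one nontrivial point is the trace identity $\mathrm{Tr}\,\Pi=3(1-\nu)RT+\nu\,\mathrm{Tr}\,\Theta=3RT=\mathrm{Tr}\,\Theta$, valid because $\mathrm{Tr}\,\Theta=3RT$ and the convex combination preserves the trace. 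Hence energy is conserved.

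The entropy inequality is the crux. Integrating by parts once, $\cint{\D(f)\log f}=-\frac1\tau\cint{\nabla_v\log f\cdot J}=\frac{3\rho}{\tau}-\frac1\tau\cint{\tfrac1f(\nabla_v f)^T\Pi\,\nabla_v f}$, where the term $\frac{3\rho}\tau$ comes from $\cint{(v-u)\cdot\nabla_v f}=-3\rho$. So everything rests on the weighted Fisher-information lower bound $\cint{\tfrac1f(\nabla_v f)^T\Pi\nabla_v f}\ge \rho\,\mathrm{Tr}(\Pi\Theta^{-1})$. To prove it I diagonalize $\Theta$ and $\Pi$ in a common orthonormal eigenbasis $(e_k)$ (as in Proposition~\ref{prop:defpos}) and, in each direction, apply a Cauchy--Schwarz (Cramér--Rao) inequality: from $\cint{(e_k\cdot(v-u))\,\partial_{e_k}f}=-\rho$ and $\cint{(e_k\cdot(v-u))^2 f}=\rho\lambda_k$ I get $\cint{(\partial_{e_k}f)^2/f}\ge\rho/\lambda_k$, and summing against the eigenvalues $\pi_k=(1-\nu)RT+\nu\lambda_k$ of $\Pi$ gives the bound. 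It then remains to show $\mathrm{Tr}(\Pi\Theta^{-1})\ge 3$: writing $\mathrm{Tr}(\Pi\Theta^{-1})=(1-\nu)RT\sum_k\lambda_k^{-1}+3\nu$ and using $\sum_k\lambda_k^{-1}\ge 3/RT$ (arithmetic--harmonic mean inequality, since $\sum_k\lambda_k=3RT$) together with $\nu<1$ yields $\mathrm{Tr}(\Pi\Theta^{-1})\ge 3(1-\nu)+3\nu=3$. Combining, $\cint{\D(f)\log f}\le \frac\rho\tau\bigl(3-\mathrm{Tr}(\Pi\Theta^{-1})\bigr)\le 0$. I expect the main obstacle to be precisely assembling this directional Cramér--Rao bound and recognizing that the hypothesis $\nu<1$ is exactly what makes the trace inequality point the right way.

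For the equilibrium characterization I would work from formulation~(\ref{eq-D2}). If $f=G(f)$ then $\log(f/G(f))=0$, so $\D(f)=0$ at once; conversely $\D(f)=0$ gives $\cint{\D(f)\log(f/G(f))}=0$, and this quantity equals $-\frac1\tau\cint{f\,(\nabla_v\log(f/G(f)))^T\Pi\,\nabla_v\log(f/G(f))}$ after integration by parts, so positive definiteness of $\Pi$ forces $\nabla_v\log(f/G(f))=0$ where $f>0$; thus $f/G(f)$ is constant, and equal mass forces $f=G(f)$. Finally $f=G(f)$ means $f$ is Gaussian with covariance $\Pi$, hence $\Theta=\Pi=(1-\nu)RT I+\nu\Theta$, so $(1-\nu)(\Theta-RT I)=0$ and, since $\nu<1$, $\Theta=RT I$; then $\Pi=RT I$ and $f=M(f)$. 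The reverse implication is immediate, since $f=M(f)$ gives $\Theta=RT I$, whence $\Pi=RT I$ and $G(f)=M(f)=f$.
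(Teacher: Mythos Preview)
Your proof is correct and follows essentially the same strategy as the paper. The only cosmetic differences are: for the entropy dissipation, the paper completes the square in each eigendirection to produce a manifestly nonpositive term plus a scalar remainder, whereas you phrase the same step as a directional Cram\'er--Rao (Cauchy--Schwarz) bound $\cint{(\partial_{e_k}f)^2/f}\ge \rho/\lambda_k$---these are identical inequalities, and both feed into the same arithmetic--harmonic mean estimate $\sum_k\lambda_k^{-1}\ge 3/RT$; for the equilibrium characterization, the paper tests against $f/G(f)$ using formulation~(\ref{eq-D1}) while you test against $\log(f/G(f))$ using~(\ref{eq-D2}), which is an equally valid choice.
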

\begin{proof}
For the conservation property, we take any function $\phi(v)$ and we
compute
\begin{equation*}
  \cint{D(f)\phi}
 = -\frac{1}{\tau}\cint{((v-u)f + \Pi \nabla_vf)\cdot \nabla_v \phi}.
\end{equation*}
With $\phi(v)=1$, we directly get $\cint{D(f)}=0$. With $\phi(v)=v_i$,
we get 
\begin{equation*}
  \cint{D(f)v_i}
 = -\frac{1}{\tau}\cint{(v_i-u_i)f + \Pi_{i,j} \partial_{v_j}f} = 0,
\end{equation*}
since $\cint{v_i f} = \rho u_i = \rho \cint{v_i f}$ and
$\cint{\partial_{v_j}f} = 0$.
For $\phi(v)=\demi|v|^2$, we get
\begin{equation*}
  \cint{D(f)v_i}
 = -\frac{1}{\tau}\cint{((v_i-u_i)f + \Pi_{i,j} \partial_{v_j}f)v_i}
= -\frac{1}{\tau}(3\rho RT - \rho {\rm Trace(\Pi))}= 0,
\end{equation*}
since~(\ref{eq-defPi}) shows that ${\rm Trace}(\Pi) = (1-\nu)3RT + \nu
{\rm Trace}(\Theta)$ and~(\ref{eq-defTheta}) implies ${\rm
  Trace}(\Theta) = 3RT$.

\

For the entropy we need some inequalities to proceed. As noticed in
the proof of proposition~\ref{prop:defpos}, $\Theta$ and $\Pi$ are
symmetric and can be diagonalized in the same orthogonal basis. 
Like it is done in \cite{ABLP}, we will now suppose that we work in this basis so that:\\ 
$$\Theta=\left(\begin{array}{ccc}
\lambda_1&0&0\\
0&\lambda_2&0\\
0&0&\lambda_3\\
\end{array}\right),$$ 
and
$$\Pi=\left(\begin{array}{ccc}
(1-\nu)RT+\nu\lambda_1&0&0\\
0&(1-\nu)RT+\nu\lambda_2&0\\
0&0&(1-\nu)RT+\nu\lambda_3\\
\end{array}\right).
$$ 
For the following it is important to remind that $\sum_{i=1}^3
\lambda_i=3RT$ and that all eigenvalues $\lambda_i$  are strictly positive.

Since we want to prove that $\cint{D(f)\ln f}\leq 0$, we compute
$\cint{D(f)\ln f}$ with an integration by parts to get
\begin{eqnarray}
  \cint{D(f)\ln(f)}&=&-\sum_{i=1}^3\cint{(\left(v_i- u_i\right)
    f+\left((1-\nu)RT+\nu\lambda_i\right)\partial_i
    f)\left(\frac{\partial_i f}{f}\right)} \nonumber \\
  &=&-\sum_{i=1}^3\frac{(1-\nu)RT+\nu\lambda_i}{\lambda_i}\cint{(\left(v_i-u_i\right) f+\lambda_i\partial_i f)\left(\frac{\partial_i f}{f}\right)} \nonumber\\
  &&-\sum_{i=1}^3\left(1-\frac{(1-\nu)RT+\nu\lambda_i}{\lambda_i}\right)\cint{(\left(v_i-u_i\right) f\left(\frac{\partial_i f}{f}\right)} \nonumber\\
  &=&-\sum_{i=1}^3\frac{(1-\nu)RT+\nu\lambda_i}{\lambda_i}\cint{(\left(v_i-u_i\right) f+\lambda_i\partial_i f)\left(\frac{\partial_i f}{f}+\frac{ \left(v_i-u_i\right)}{\lambda_i}-\frac{ \left(v_i-u_i\right)}{\lambda_i}\right)} \nonumber\\
  &&-\sum_{i=1}^3\left(1-\frac{(1-\nu)RT+\nu\lambda_i}{\lambda_i}\right)\cint{(\left(v_i-u_i\right)
    f\left(\frac{\partial_i f}{f}\right) }\nonumber\\
  &=&A+B+C \nonumber,
\end{eqnarray}
\
with
\begin{eqnarray} 
&&A=-\sum_{i=1}^3\frac{(1-\nu)RT+\nu\lambda_i}{\lambda_i}\cint{(\left(v_i-u_i\right)
  f+\lambda_i\partial_i f)\left(\frac{\partial_i f}{f}+\frac{
      \left(v_i-u_i\right)}{\lambda_i}\right)}\label{eq-A},  \\
&&B=-\sum_{i=1}^3\frac{(1-\nu)RT+\nu\lambda_i}{\lambda_i}\cint{(\left(v_i-u_i\right)
  f+\lambda_i\partial_i f)\left(-\frac{
      \left(v_i-u_i\right)}{\lambda_i}\right)}\label{eq-B},  \\
&&C=-\sum_{i=1}^3\left(1-\frac{(1-\nu)T+\nu\lambda_i}{\lambda_i}\right)\cint{(\left(v_i-u_i\right)
  f\left(\frac{\partial_i f}{f}\right) }\label{eq-C} .
\end{eqnarray}

First, note that $A$ is clearly negative since
\begin{equation*}
  (\left(v_i-u_i\right) f+\lambda_i\partial_i
  f)\left(\frac{\partial_i f}{f}+\frac{
      \left(v_i-u_i\right)}{\lambda_i}\right)={f}{\lambda_i}
{\left(\frac{\partial_i f}{f}+\frac{ \left(v_i-u_i\right)}{\lambda_i}\right)}^2,
\end{equation*}
and $((1-\nu)RT+\nu\lambda_i)$ and $\lambda_i$ are positive (due to the
assumption on $\nu$ and proposition~\ref{prop:defpos}), and $f$ is
positive too. Then, $B$ is equal to $0$ since by integration by parts,
we get $\cint{((v_i-u_i)f +\lambda_i \partial_i f)\left(-\frac{
      \left(v_i-u_i\right)}{\lambda_i}\right)} =
-\frac{1}{\lambda_i}\cint{(v_i-u_i)^2f}+ \cint{f}$, which is 0 since
$\cint{(v_i-u_i)^2f}=\rho \lambda_i$.
 
 \
 
 The last term $C$ satisfies
 \begin{eqnarray*}
 C&=&-\sum_{i=1}^3\left(1-\frac{(1-\nu)RT+\nu\lambda_i}{\lambda_i}\right)\cint{\left(v_i-u_i\right) f\left(\frac{\partial_i f}{f}\right)}\\
 &=&-\sum_{i=1}^3\left(1-\frac{(1-\nu)RT+\nu\lambda_i}{\lambda_i}\right)\cint{\left(v_i-u_i\right)\partial_i f }\\
  &=&\sum_{i=1}^3\left(1-\frac{(1-\nu)RT+\nu\lambda_i}{\lambda_i}\right)\cint{f}\\
  &=&\sum_{i=1}^3 (1-\nu)\left(1-\frac{RT}{\lambda_i}\right)\cint{f}\\
  &=&\left(3-\sum_{i=1}^3\frac{RT}{\lambda_i} \right)(1-\nu)\cint{f}.
 \end{eqnarray*}



The Jensen inequality gives
$\frac{1}{3}\sum_{i=1}^3\frac{1}{\lambda_i}\geq
(\frac{1}{3}\sum_{i=1}^3\lambda_i)^{-1} = \frac{1}{RT}$,
which shows that $C$ is non positive (since $\nu<1$), and hence concludes the proof
of the entropy inequality. 

In the general case, $\Theta$ and $\Pi$ are not diagonal. However, the
same analysis can still be done: it is sufficient to use the change of
variables $v'=P^Tv$, where $P$ is the matrix of the orthonormal basis
of eigenvectors in which $\Theta$ and $\Pi$ are diagonal. Indeed, let
us define $f'(v')=f(v)$, then $\nabla_v f(v) = P \nabla_v' f'(v')$,
and
\begin{equation*}
  \int_{\R^3} (1,v',(v'-u')\otimes(v'-u')) f'(v') \, dv' = (\rho,\rho
  u',\rho \Theta'),
\end{equation*}
where $u'=P^Tu$, and where $\Theta'=P^T\Theta P$ is diagonal. We also
define $\Pi'=P^T\Pi P$ which is also diagonal ad we have
$\Pi'=(1-\nu)RT I + \nu \Theta'$. Then it is easy to find that we again
have $\cint{D(f)\ln(f)}=A+B+C$, but now with $A$, $B$, and $C$ that
have to be defined by~(\ref{eq-A})--(\ref{eq-C}) with prime variables
$f',v'$, and where $\cint{}$ denotes integration with respect to
$v'$. Then we are back to the diagonal case and the previous proof is
still valid.


For the equilibrium property, we use~(\ref{eq-D1}) to obtain
\begin{equation*}
 \cint{\D(f)\frac{f}{G(f)}} 
= -\frac{1}{\tau}\cint{ G(f)\left( \Pi \nabla_v \frac{f}{G(f)}
  \right)\cdot \nabla_v \frac{f}{G(f)}}\leq 0,
\end{equation*}
since $\Pi$ is symmetric positive definite.
Moreover,
this integral is zero if and only if $\nabla_v \frac{f}{G(f)} =0$,
which is equivalent to $f=\alpha G(f)$. Since $f$ and $G(f)$ have the
same first 5 moments, this relation is true if and only if $\alpha=1$,
and hence $f=G(f)$. Finally, this last relation implies
$\cint{(v-u)\otimes(v-u)f} = \cint{(v-u)\otimes(v-u)G(f)}$, that is to
say $\Theta=\Pi$. Then using~(\ref{eq-defPi}) gives $\Theta = RT I$
which implies $G(f) = M(f)$ and hence $f=M(f)$. Conversely, if
$f=M(f)$, it is obvious that $G(f) = M(f)$ and $D(f) = 0$.
\end{proof}

\section{Chapman-Enskog analysis}
\label{sec:CE}

In this section, we prove the following proposition.
\begin{proposition}
  The solution of the kinetic model~(\ref{eq-es_fpl_adim}) satisfies,
  up to $O(\eps^2)$, the Navier-Stokes equations
\begin{equation}\label{eq-ns} 
\begin{split}
& \dt \rho + \nabla \cdot \rho u = 0, \\
& \dt \rho u + \nabla\cdot  (\rho u\otimes u) + \nabla p = -\nabla
\cdot \sigma, \\
& \dt E + \nabla \cdot (E+p)u = -\nabla\cdot q - \nabla\cdot(\sigma u),
\end{split}
\end{equation}
where the shear stress tensor and the heat flux are given by
\begin{equation}  \label{eq-fluxes_ns}
\sigma = -\mu \bigl(\nabla u + (\nabla u)^T -\frac{2}{3}\nabla\cdot
u\bigr), \quad \text{and} \quad  q=-\kappa \nabla \cdot T,
\end{equation}
with the following values of the viscosity and heat
transfer coefficients
\begin{equation}  \label{eq-coef}
\mu = \frac{\tau p}{2(1-\nu)}, \quad \text{and} \quad 
\kappa = \frac{5}{6}\tau p R.
\end{equation}
Moreover,
the corresponding Prandtl number is 
\begin{equation*}
\Pr = \frac{3}{2(1-\nu)},
\end{equation*}
and $\eps$ is the Knudsen number defined below.
\end{proposition}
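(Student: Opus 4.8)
The plan is to carry out a standard Chapman-Enskog expansion on the nondimensionalized kinetic model~(\ref{eq-es_fpl_adim}), expanding the distribution function as $f = f_0 + \eps f_1 + O(\eps^2)$ where $\eps$ is the Knudsen number. First I would rescale the equation so that the collision operator $D(f)$ is divided by $\eps$, reflecting that collisions dominate in the near-continuum regime. At leading order $O(\eps^{-1})$, the equation forces $D(f_0)=0$, and by the equilibrium property of Proposition~\ref{prop:propD} this gives $f_0 = M(f_0)$, the local Maxwellian. The zeroth-order moments then reproduce the Euler equations, so the correction $f_1$ is what produces the viscous and heat-flux terms.

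Next I would collect the $O(\eps^0)$ terms to obtain the linearized equation $\dt f_0 + v\cdot\nabla_x f_0 = \L(f_1)$, where $\L$ is the linearization of $D$ about the Maxwellian. The key move is to solve this for $f_1$, which requires inverting $\L$ on the space orthogonal to the collision invariants $(1,v,\tdemi|v|^2)$. I expect this inversion to be the main obstacle: unlike the BGK case where $\L$ is essentially a scalar multiple of the identity, here the anisotropic tensor $\Pi=(1-\nu)RT I + \nu\Theta$ couples the velocity components, so one must carefully compute the action of the Fokker-Planck operator on Hermite-type polynomials in $v$ to identify the relevant eigenfunctions and eigenvalues. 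The parameter $\nu$ will enter precisely through these eigenvalues, and tracking it correctly is what ultimately yields the $\nu$-dependence in the viscosity.

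Having obtained $f_1$, I would then compute the first-order contributions to the stress tensor $\sigma = \cint{(v-u)\otimes(v-u)f_1}$ and the heat flux $q = \cint{\tdemi(v-u)|v-u|^2 f_1}$, using Gaussian moment integrals against the Maxwellian. Matching these expressions to the Newtonian forms in~(\ref{eq-fluxes_ns}) identifies the transport coefficients. I anticipate that the viscosity picks up the factor $1/(1-\nu)$ because the off-diagonal (traceless) part of $\Pi$ relaxes at a rate modified by $\nu$, whereas the heat flux is governed only by the trace part of the diffusion, which is unaffected by $\nu$ since $\mathrm{Trace}(\Theta)=3RT$ regardless of $\nu$. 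This asymmetry is exactly what decouples $\mu$ and $\kappa$ and is the whole point of the model.

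Finally, with $\mu = \tau p /(2(1-\nu))$ and $\kappa = \tfrac56 \tau p R$ in hand, the Prandtl number follows immediately from its definition $\Pr = \tfrac52 R \mu/\kappa$ (for a monatomic gas with specific heat $c_p = \tfrac52 R$), giving $\Pr = \tfrac{3}{2(1-\nu)}$; setting this to $\tfrac23$ recovers the physically correct value at $\nu = -\tfrac12$. The bookkeeping of velocity moments will be tedious but routine; the genuine difficulty is the anisotropic inversion step, and I would structure the proof so that the eigenstructure of $\L$ is established cleanly before any moments are taken, so that the $\nu$-dependence is transparent rather than buried in the final integrals.
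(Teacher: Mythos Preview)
Your overall Chapman--Enskog strategy is sound, but the paper takes a cleaner route, and your description of the linearization step contains a genuine misconception about where the $\nu$-dependence enters.

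The paper does \emph{not} expand around the Maxwellian. Instead it writes $f = G(f)(1+\eps g)$, expanding around the anisotropic Gaussian $G(f)$ of Proposition~\ref{prop:formulations}. This is the key simplification: using formulation~(\ref{eq-D1}) one has $D(G(1+\eps g)) = \frac{\eps}{\tau}\nabla_v\cdot(\Pi G\nabla_v g)$ exactly, and since $\Pi = TI + O(\eps)$ and $G = M + O(\eps)$, the linearized operator reduces to the \emph{standard isotropic} Fokker--Planck operator $Lg = \frac{1}{M}\nabla_v\cdot(TM\nabla_v g)$ with no $\nu$ in it. Its eigenvalues on $A_i$ and $B_{ij}$ are simply $-3$ and $-2$. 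The factor $1/(1-\nu)$ enters not through the spectrum of $L$ but through the purely algebraic identity $\Sigma(f) = pI + \frac{\eps}{1-\nu}\Sigma(Mg) + O(\eps^2)$ (Proposition~\ref{prop:approx_Sigma}), which follows directly from $\rho\Pi = (1-\nu)pI + \nu\Sigma(f)$.

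Your plan to expand around $M(f)$ can be made to work, but your diagnosis of the obstruction is wrong. At leading order $\Pi = TI + O(\eps)$, so the linearized operator is \emph{not} anisotropic and does not ``couple the velocity components'' as you say. Rather, writing $f = M(1+\eps g)$ produces at order $\eps$ the isotropic $L$ plus an extra \emph{nonlocal} term $\frac{\nu}{\tau}\nabla_v\cdot(\Theta_1\nabla_v M)$, coming from the correction $\Theta_1 = \frac{1}{\rho}\cint{(v-u)\otimes(v-u)Mg}$ to $\Pi$. Handling this self-consistent term (which feeds the second moment of $g$ back into the equation for $g$) is precisely the complication the paper sidesteps by expanding around $G(f)$; it is where the $\nu$ would appear in your route, not in any anisotropic eigenstructure.

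Finally, an arithmetic slip: $\Pr = \frac{3}{2(1-\nu)} = \frac{2}{3}$ gives $\nu = -\frac{5}{4}$, not $-\frac{1}{2}$.
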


In the first step of the proof, we write the conservation laws
derived from~(\ref{eq-es_fpl_adim}) and~(\ref{eq-consD})
\begin{equation}\label{eq-lois_cons_bis} 
\begin{split}
& \dt \rho + \nabla_x\cdot \rho u = 0, \\
& \dt \rho u + \nabla_x\cdot (\rho u\otimes u) + \nabla_x\cdot \Sigma(f) = 0, \\
& \dt E + \nabla_x\cdot (Eu+\Sigma(f) u + q(f)) = 0,
\end{split}
\end{equation}
where $\Sigma(f)$ and $q(f)$ denote the stress tensor and the heat
flux, defined by
\begin{equation}  \label{eq-Sigma_q}
\Sigma(f)=\cint{(v-u)\otimes(v-u)f} \qquad q(f)=\cint{\tdemi (v-u)|v-u|^2f}.
\end{equation}
The Chapman-Enskog procedure consists in looking for an approximation
of $\Sigma(f)$ and $q(f)$ up to order $O(\eps^2)$ as functions of
$\rho,u,T$ and their gradients.

To do so, we now write our model in a
non-dimensional form. Assume we have some reference values of length
$x_*$, pressure $p_*$, and temperature $T_*$. With these reference
values, we can derive reference values for all the other quantities:
mass density $\rho_*=p_*/RT_*$, velocity $v_*=\sqrt{RT_*}$, time
$t_*=x_*/v_*$, distribution function $f_*=\rho_*/(RT_*)^{3/2}$. We also
assume we have a reference value for the relaxation time $\tau_*$. By
using the non-dimensional variables $w'=w/w_*$ (where $w$ stands for
any variables of the problem), our model can be written
\begin{equation}  \label{eq-es_fpl_adim}
\dt f+v\cdot\nabla_x f = \frac{1}{\eps}\D(f),
\end{equation}
where $\eps = \frac{v_*\tau_*}{x_*}$ is the Knudsen number. Note that
since we always work with the non-dimensional variables from now
on, these variables are not written with the '
in~(\ref{eq-es_fpl_adim}).

Note that an important consequence of the use of these non-dimensional
variables is that $RT$ has to be replaced by $T$ in every expressions
given before. Namely, now $\Pi$ is defined by
\begin{equation}\label{eq-Pi_adim} 
  \Pi=(1-\nu)T I+\nu\Theta,
\end{equation}
the temperature is now defined by
\begin{equation*}
E=\tdemi \rho |u|^2+\troisdemi \rho T,
\end{equation*}
and the Maxwellian of $f$ now is
\begin{equation*}
  M(f)=\frac{\rho}{\left(2\pi
      T\right)^{3/2}}\exp\left(\frac{|v-u|^{2}}{2T}\right).
\end{equation*}

Now, it is standard to look for the deviation of $f$ from its own
local equilibrium, that is to say to set $f=M(f)(1+\eps g)$. However,
this requires the linearization of the collision operator $D$ around
$M(f)$, which is not very easy. At the contrary, it will be shown that
it is much simpler to look for the deviation of $f$ from the
anisotropic Gaussian $G(f)$ defined in~(\ref{eq-G}). Since it can
easily be seen that $M(f)$ and $G(f)$ are close up to $O(\eps)$ terms,
this expansion is sufficient to get the Navier-Stokes equations.

First, we give some approximation properties.
\begin{proposition}
  We write $f$ as $f=G(f)(1+\eps g)$. Then we have:
\begin{equation}\label{eq-mtsg} 
  \cint{(1,v,\tdemi |v|^2)G(f)g} = 0.
\end{equation} 
Moreover, if we assume that the deviation
  $g$ is an $O(1)$ with respect to $\eps$, then we have
\begin{equation}\label{eq-approxPi} 
  \Pi=TI + O(\eps)
\end{equation}
and 
\begin{equation}  \label{eq-approxG}
G(f) = M(f) + O(\eps).
\end{equation}
\end{proposition}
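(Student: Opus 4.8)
The plan is to establish the three claims in turn, exploiting the fact that $f = G(f)(1+\eps g)$ by construction and that $G(f)$ is designed to share the first five moments of $f$. The first identity~(\ref{eq-mtsg}) should be immediate: by definition the collisional invariants $(1,v,\tdemi|v|^2)$ produce the same moments against $f$ and against $G(f)$ (this is exactly the moment-matching property of $G(f)$ stated in Proposition~\ref{prop:formulations}), so subtracting gives $\cint{(1,v,\tdemi|v|^2)(f-G(f))}=0$; since $f-G(f)=\eps G(f)g$, dividing by $\eps$ yields~(\ref{eq-mtsg}).

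For the approximation~(\ref{eq-approxPi}), I would start from the definition $\Theta=\frac1\rho\cint{(v-u)\otimes(v-u)f}$ and split $f=G(f)+\eps G(f)g$. The leading term contributes $\frac1\rho\cint{(v-u)\otimes(v-u)G(f)}=\Pi$ by the temperature-tensor property of $G(f)$, while the $O(\eps)$ correction is $\frac{\eps}{\rho}\cint{(v-u)\otimes(v-u)G(f)g}$, which is $O(\eps)$ under the assumption that $g$ is $O(1)$ and that the relevant second moments converge. Hence $\Theta=\Pi+O(\eps)$. Substituting this into the defining relation~(\ref{eq-Pi_adim}), $\Pi=(1-\nu)TI+\nu\Theta$, gives $\Pi=(1-\nu)TI+\nu\Pi+O(\eps)$, i.e. $(1-\nu)\Pi=(1-\nu)TI+O(\eps)$, and dividing by $(1-\nu)$ (legitimate since $\nu<1$) produces $\Pi=TI+O(\eps)$, which is~(\ref{eq-approxPi}).

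Finally, for~(\ref{eq-approxG}) I would compare the two Gaussians $G(f)$ and $M(f)$ directly through their defining formulas. The Maxwellian $M(f)$ is precisely the anisotropic Gaussian with covariance $TI$, so $G(f)$ and $M(f)$ differ only through the replacement of the matrix $TI$ by $\Pi$, both in the normalizing factor $\sqrt{\det(2\pi\Pi)}$ versus $(2\pi T)^{3/2}$ and in the quadratic form $(v-u)\Pi^{-1}(v-u)$ versus $|v-u|^2/T$. Using~(\ref{eq-approxPi}), write $\Pi=TI+O(\eps)$, so that $\Pi^{-1}=\frac1T I+O(\eps)$ and $\det(2\pi\Pi)=(2\pi T)^3+O(\eps)$; a first-order Taylor expansion of the exponential and of the prefactor in the small matrix perturbation then gives $G(f)=M(f)+O(\eps)$.

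The steps are all routine once~(\ref{eq-approxPi}) is in hand, so the only delicate point is the bookkeeping in~(\ref{eq-approxPi}): one must be sure the $O(\eps)$ estimate of the correction moment $\frac1\rho\cint{(v-u)\otimes(v-u)G(f)g}$ is uniform, i.e. that $g$ has enough velocity decay for this second moment to be controlled by the assumed $O(1)$ size of $g$. I expect this integrability/decay requirement to be the main obstacle, and it is presumably handled here as an a priori regularity assumption on the expansion rather than proved from scratch; granting it, the algebraic manipulation closing~(\ref{eq-approxPi}) via the factor $(1-\nu)$ is the genuinely load-bearing move, and everything else follows by Taylor expansion.
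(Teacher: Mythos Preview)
Your proposal is correct and follows essentially the same approach as the paper. The paper's own proof is extremely terse---it simply says that~(\ref{eq-mtsg}) follows from the moment-matching property of $G(f)$ in Proposition~\ref{prop:formulations}, and that~(\ref{eq-approxPi}) and~(\ref{eq-approxG}) are ``simple Taylor expansions'' of~(\ref{eq-Pi_adim}) and~(\ref{eq-G}) in $\eps$---so your write-up is in fact a faithful unpacking of what those Taylor expansions must mean (in particular the step $\Theta=\Pi+O(\eps)$ followed by the algebraic closure via $(1-\nu)$ is exactly the content hidden behind the phrase ``Taylor expansion of~(\ref{eq-Pi_adim})'').
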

\begin{proof}
We have already noticed that
$\cint{(1,v,\tdemi|v|^2)G(f)}=\cint{(1,v,\tdemi|v|^2)f}$ (see proposition~\ref{prop:formulations}), which
gives~(\ref{eq-mtsg}). The other relations~(\ref{eq-approxPi})
and~(\ref{eq-approxG}) are obtained by simple Taylor expansions
of~(\ref{eq-Pi_adim}) and~(\ref{eq-G}) with
respect to $\eps$.
\end{proof}

Now, we show that $\Sigma(f)$ and $q(f)$ can be approximated up to
$O(\eps^2)$ by using the previous decomposition.
\begin{proposition} \label{prop:approx_Sigma}
  We have
\begin{equation}  \label{eq-approx_Sigma}
\Sigma(f) = p I + \frac{\eps}{1-\nu}\Sigma(M(f)g) + O(\eps^2),
\end{equation}
and
\begin{equation}  \label{eq-approx_q}
q(f) = \eps q(M(f)g) + O(\eps^2).
\end{equation}
\end{proposition}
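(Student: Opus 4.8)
The plan is to insert the ansatz $f=G(f)(1+\eps g)$ into the definitions~(\ref{eq-Sigma_q}) of $\Sigma(f)$ and $q(f)$, split each moment into a zeroth-order part (the moment of $G(f)$ alone) and a first-order part carrying an explicit $\eps$, and then use two ingredients already at hand: $G(f)$ is even in $(v-u)$ with known second moment, and $G(f)=M(f)+O(\eps)$ by~(\ref{eq-approxG}). In each first-order term I would replace $G(f)$ by $M(f)$; because of the prefactor $\eps$ and~(\ref{eq-approxG}) this costs only $O(\eps^2)$ (assuming, as usual, enough integrability for the moments of $g$ to converge).

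The heat flux is immediate. Writing $q(f)=\cint{\tdemi(v-u)|v-u|^2 G(f)}+\eps\cint{\tdemi(v-u)|v-u|^2 G(f)g}$, the zeroth-order term vanishes by parity: the Gaussian $G(f)$ is even in $(v-u)$ whereas $(v-u)|v-u|^2$ is odd. Replacing $G(f)$ by $M(f)$ in the surviving term then gives $q(f)=\eps\,q(M(f)g)+O(\eps^2)$, which is~(\ref{eq-approx_q}).

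The stress tensor is the delicate step and the one I expect to be the main obstacle. The same splitting gives $\Sigma(f)=\cint{(v-u)\otimes(v-u)G(f)}+\eps\,\Sigma(M(f)g)+O(\eps^2)$, the first-order term already reduced via~(\ref{eq-approxG}). The zeroth-order term is the second moment of $G(f)$, equal to $\rho\Pi$ (the temperature tensor $\Pi$ of $G(f)$ times its mass $\rho$, from the Gaussian integrals), and the subtlety is that $\Pi$ is itself an unknown: by~(\ref{eq-Pi_adim}) together with $\rho\Theta=\cint{(v-u)\otimes(v-u)f}=\Sigma(f)$ (see~(\ref{eq-defTheta})) one has the \emph{exact} identity $\rho\Pi=(1-\nu)\,pI+\nu\,\Sigma(f)$, with $p=\rho T$. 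Hence the expansion closes on itself, $\Sigma(f)=(1-\nu)\,pI+\nu\,\Sigma(f)+\eps\,\Sigma(M(f)g)+O(\eps^2)$, rather than being explicit. Solving this linear relation — moving $\nu\,\Sigma(f)$ to the left and dividing by $1-\nu$ — yields $\Sigma(f)=pI+\frac{\eps}{1-\nu}\Sigma(M(f)g)+O(\eps^2)$, which is~(\ref{eq-approx_Sigma}). The factor $\frac{1}{1-\nu}$ thus arises entirely from this self-consistency and is what will later rescale the viscosity and correct the Prandtl number; it is essential to keep $\rho\Pi=(1-\nu)pI+\nu\Sigma(f)$ exact rather than to use the cruder $\Pi=TI+O(\eps)$ of~(\ref{eq-approxPi}), which would discard precisely the $O(\eps)$ piece that matters at this order.
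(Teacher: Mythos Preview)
Your proof is correct and follows essentially the same approach as the paper: split the moments via $f=G(f)(1+\eps g)$, use parity for $q$, recognize $\Sigma(G(f))=\rho\Pi$, insert the exact identity $\rho\Pi=(1-\nu)pI+\nu\Sigma(f)$, and solve the resulting self-consistent relation for $\Sigma(f)$. You are in fact slightly more explicit than the paper in justifying the replacement of $G(f)$ by $M(f)$ in the first-order terms via~(\ref{eq-approxG}), and your remark that one must not use the cruder $\Pi=TI+O(\eps)$ at this stage is exactly the point.
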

\begin{proof}
  By using $f=G(f)(1+\eps g)$, we have 
\begin{equation}  \label{eq-Sigmaf1}
\Sigma(f) = \Sigma(G(f)) +
  \eps \Sigma(G(f)g) = \rho \Pi + \eps \Sigma(G(f)g) .
\end{equation}
But since by
  definition $\Sigma(f)= \rho \Theta(f)$, then~(\ref{eq-Pi_adim}) also
  implies $\rho \Pi = (1-\nu)\rho T I + \nu \Sigma(f)$. Using this
  relation in~(\ref{eq-Sigmaf1}) gives
$\Sigma(f) =  (1-\nu)\rho T I + \nu \Sigma(f) + \eps \Sigma(G(f)g)$
which yields~(\ref{eq-approx_Sigma}). The
approximation~(\ref{eq-approx_q}) is immediately deduced from the
decomposition $f=G(f)(1+\eps g)$.

\end{proof}

Consequently, the Navier-Stokes equations can be obtained provided
that $\Sigma(M(f)g)$ and $q(M(f)g)$ can be approximated, up to
$O(\eps)$. This can be done by looking for an approximation of
the deviation $g$ itself: it is obtained by using the decomposition
$f=G(f)(1+\eps g)$ into~(\ref{eq-es_fpl_adim}), which gives
\begin{equation*}
  \dt G(f)+v\cdot\nabla_x G(f) + O(\eps) = \frac{1}{\eps}\D(G(f)(1+\eps
g)).
\end{equation*}
First, we state what the expansion of
$\D(G(f)(1+\eps g))$ is. 
\begin{proposition}
We have
\begin{equation*}
  D(G(f)(1+\eps
g)) = \eps \frac{1}{\tau}M(f) \L g + O(\eps^2), 
\end{equation*}
where $\L$ is the linear operator
\begin{equation}  \label{eq-L}
\L g = \frac{1}{M(f)}\divv (TM(f)\nabla_v g).
\end{equation}
\end{proposition}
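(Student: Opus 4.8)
The plan is to \emph{not} differentiate $D$ in its raw divergence form~(\ref{eq-D}), but to start instead from the equivalent formulation~(\ref{eq-D1}) of Proposition~\ref{prop:formulations}, which is tailored to exactly this expansion. The key observation is that when $f$ is written as $f=G(f)(1+\eps g)$, the quotient $f/G(f)=1+\eps g$ \emph{by construction}, so $\nabla_v\bigl(f/G(f)\bigr)=\eps\,\nabla_v g$. I also use that $D$, the tensor $\Pi$, the velocity $u$ and the Gaussian $G(f)$ are all built from the moments of $f$ itself; since $G(f)(1+\eps g)$ is literally the function $f$, all these moment-dependent objects are exactly those of $f$, and no spurious $O(\eps)$ correction enters through the arguments of the operator.

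With this in hand, substituting $f=G(f)(1+\eps g)$ into~(\ref{eq-D1}) gives the \emph{exact} identity
\begin{equation*}
D(f)=\frac{1}{\tau}\divv\Bigl(\Pi\,G(f)\,\nabla_v\tfrac{f}{G(f)}\Bigr)=\frac{\eps}{\tau}\divv\bigl(\Pi\,G(f)\,\nabla_v g\bigr),
\end{equation*}
in which the troublesome drift terms $(v-u)f$ have already been absorbed: no cancellation of $(v-u)$ contributions needs to be checked by hand, because formulation~(\ref{eq-D1}) has done it once and for all.

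It then remains to simplify $\Pi\,G(f)$ to leading order. Invoking the approximations~(\ref{eq-approxPi}) and~(\ref{eq-approxG}) I replace $\Pi$ by $TI$ and $G(f)$ by $M(f)$, each at the cost of an $O(\eps)$ error, so that $\Pi\,G(f)\,\nabla_v g=T\,M(f)\,\nabla_v g+O(\eps)$. Because of the overall prefactor $\eps/\tau$ in the identity above, this $O(\eps)$ error is pushed to $O(\eps^2)$, yielding $D(f)=\frac{\eps}{\tau}\divv\bigl(T\,M(f)\,\nabla_v g\bigr)+O(\eps^2)$, which is precisely $\eps\frac{1}{\tau}M(f)\L g+O(\eps^2)$ once one reads off the definition~(\ref{eq-L}) of $\L$.

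I expect the only delicate point to be the control of the remainder rather than the algebra: one has to argue that the $O(\eps)$ corrections to $\Pi\,G(f)$ stay $O(\eps)$ after multiplication by $\nabla_v g$ and application of $\divv$, which requires $g$ and its velocity derivatives to be $O(1)$ with the Gaussian-type decay needed for the moments to exist. This is the standard formal Chapman--Enskog bookkeeping, of the same nature as that already assumed in the preceding proposition, rather than a rigorous estimate.
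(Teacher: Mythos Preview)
Your argument is correct and follows essentially the same route as the paper: start from formulation~(\ref{eq-D1}), use $\nabla_v\bigl(f/G(f)\bigr)=\eps\nabla_v g$, then replace $\Pi$ by $TI$ via~(\ref{eq-approxPi}) and $G(f)$ by $M(f)$ via~(\ref{eq-approxG}). Your version is somewhat more explicit about why the moment-dependent objects in $D$ are exactly those of $f$ and about the formal nature of the remainder control, but the strategy is identical.
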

\begin{proof}
Formulation~(\ref{eq-D1}) of $D(f)$ gives
\begin{equation}\label{eq-Dexpansion} 
  \D(f) = \frac{1}{\tau}\divv\left( \Pi G(f) \nabla_v
    \frac{G(f)(1+\eps g )}{G(f)}\right).
\end{equation}
A direct computation gives $\nabla_v \frac{G(f)(1+\eps g )}{G(f)} =
\eps\nabla_v g$. Using this relation and~(\ref{eq-approxPi})
into~(\ref{eq-Dexpansion}) gives $ D(G(f)(1+\eps g)) =
\frac{\eps}{\tau}\divv (TG(f)\nabla_v g)
+ O(\eps^2)$. Using~(\ref{eq-approxG}) gives the final result.
\end{proof}
This proposition shows us that $g$ satisfies $M(f)\L g  = \tau(\dt G(f) + v
\cdot G(f)) + O(\eps) $, and then using~(\ref{eq-approxG}) gives
\begin{equation} \label{eq-Lg}
  \L g  = \frac{\tau}{M(f)}(\dt M(f) + v \cdot \nabla_x M(f)) + O(\eps) .
\end{equation}
We now have to solve this equation approximately to obtain an approximation of $g$.

First, the right-hand side of~(\ref{eq-Lg}) is expanded, and the time
derivatives of $\rho, u,$ and $T$ are approximated up to $O(\eps)$ by
their space gradients by using~(\ref{eq-lois_cons_bis}). Then we get
\begin{equation*}
  \frac{1}{M(f)}(\dt M(f) + v \cdot \nabla_x M(f)) = A(V) \cdot \frac{\nabla
    T}{\sqrt{T}} + B(V) : \nabla u + O(\eps),
\end{equation*}
where $V=(v-u)/\sqrt{T}$ and
\begin{equation*}
  A(V) = \left(\frac{|V|^2}{2}-\frac{5}{2}\right)V 
\quad \text{ and } \quad  B(V) = V\otimes V - \frac{1}{3}|V|^2I.
\end{equation*}
Then~(\ref{eq-Lg}) reads
\begin{equation} \label{eq-Lgbis}
  \L g  = \tau \left(A(V) \cdot \frac{\nabla T}{\sqrt{T}} + B(V) : \nabla u\right) + O(\eps).
\end{equation}
This equation can be solved by showing that $A$ and $B$ are
eigenvectors of $\L$. Indeed, we have the following proposition.
\begin{proposition}
  The components of $A$ and $B$ satisfy
\begin{equation*}
  \L A_i = - 3 A_i \quad \text{ and } \quad \L B_{i,j} = -2 B_{i,j}.
\end{equation*}
\end{proposition}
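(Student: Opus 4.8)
The plan is to reduce $\L$ to a standard Ornstein--Uhlenbeck (Hermite) operator and then read off the eigenvalues from the polynomial degrees. First I would simplify the operator~(\ref{eq-L}). Since $M(f)$ is the non-dimensional Maxwellian, one has $\nabla_v M(f) = -\frac{v-u}{T}M(f)$, and because $T$ does not depend on $v$, expanding the divergence gives
\begin{equation*}
  \L g = \frac{T}{M(f)}\divv\bigl(M(f)\nabla_v g\bigr) = T\Delta_v g - (v-u)\cdot\nabla_v g.
\end{equation*}
Both $A$ and $B$ are functions of the reduced velocity $V=(v-u)/\sqrt{T}$, so the natural move is the substitution $\nabla_v = T^{-1/2}\nabla_V$, $\Delta_v = T^{-1}\Delta_V$, and $v-u=\sqrt{T}\,V$, which removes every occurrence of $T$ and turns $\L$ into the constant-coefficient form
\begin{equation*}
  \L = \Delta_V - V\cdot\nabla_V .
\end{equation*}
This is precisely the operator whose eigenfunctions are the Hermite polynomials, the polynomial of degree $k$ being associated with the eigenvalue $-k$.

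Next I would verify the two relations directly. For $B_{i,j}=V_iV_j-\frac{1}{3}|V|^2\delta_{i,j}$, which is homogeneous of degree $2$, Euler's identity gives $V\cdot\nabla_V B_{i,j}=2B_{i,j}$, while a short computation shows $\Delta_V B_{i,j}=0$ (the trace-free part of $V\otimes V$ is harmonic). Hence $\L B_{i,j}=-2B_{i,j}$. For $A_i=\frac{1}{2}|V|^2V_i-\frac{5}{2}V_i$, which is not homogeneous, I would treat the two pieces separately: $\L V_i=-V_i$, and from $\Delta_V(|V|^2V_i)=10V_i$ together with $V\cdot\nabla_V(|V|^2V_i)=3|V|^2V_i$ one gets $\L(|V|^2V_i)=10V_i-3|V|^2V_i$. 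Combining, $\L A_i = \frac{1}{2}(10V_i-3|V|^2V_i)+\frac{5}{2}V_i = -3\bigl(\frac{1}{2}|V|^2V_i-\frac{5}{2}V_i\bigr) = -3A_i$.

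The only delicate point---and the reason the particular constants $\frac{5}{2}$ and $\frac{1}{3}$ appear in the definitions of $A$ and $B$---is that $V\cdot\nabla_V$ alone returns the degree as eigenvalue only on homogeneous polynomials, whereas $\Delta_V$ generically produces lower-degree remainders. The computation closes because those remainders either vanish (harmonicity, for $B$) or are exactly cancelled by the affine correction $-\frac{5}{2}V_i$ (for $A$); equivalently, $A_i$ and $B_{i,j}$ are the degree-$3$ and degree-$2$ Hermite tensors. Checking these cancellations is the crux, but it is a routine differentiation once $\L$ is in the form $\Delta_V-V\cdot\nabla_V$. I would also emphasize that reducing $\L$ to this $x,t$-independent constant-coefficient form is exactly what makes the eigenvalue identities hold pointwise in $(t,x)$, so that~(\ref{eq-Lgbis}) can subsequently be inverted term by term.
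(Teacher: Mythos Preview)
Your argument is correct and follows essentially the same route as the paper: both reduce $\L$ via the change of variables $V=(v-u)/\sqrt{T}$ to the constant-coefficient Ornstein--Uhlenbeck form $\Delta_V - V\cdot\nabla_V$ (the paper writes it equivalently as $\frac{1}{M_0}\nabla_V\cdot(M_0\nabla_V g)$), and then verify the eigenvalue relations by direct differentiation. Your presentation is in fact more detailed than the paper's, which leaves the final computation to the reader; the Hermite-polynomial interpretation and the remark on why the constants $\tfrac{5}{2}$ and $\tfrac{1}{3}$ are forced are a welcome addition but do not constitute a different method.
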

\begin{proof}
By using the change of variables $V=(v-u)/\sqrt{T}$, the
definition~(\ref{eq-L}) reduces to
\begin{equation}\label{eq-LV} 
  Lg = \nabla_V \cdot (M_0(V)\nabla_V g),
\end{equation}
where $M_0(V) =
\frac{1}{(2\pi)^{\frac{3}{2}}}\exp(-\frac{|V|^2}{2})$. The direct
  computation of $\L A_i$ and $\L B_{i,j}$ is easily obtained
  from~(\ref{eq-LV}) and is left to the reader.
\end{proof}
This property shows that we can look for an approximate solution
of~(\ref{eq-Lgbis}) as a linear combination of the components of $A$
and $B$. We find
\begin{equation*}
  g = -\frac{\tau}{3}\frac{\nabla T}{\sqrt{T}} \cdot A(V) -
  \frac{\tau}{2}\nabla u : B(V) + O(\eps).
\end{equation*}
Then we just have to insert this expression into $\Sigma(M(f)g)$ and
$q(M(f)g)$ to get, after calculation of Gaussian integrals,
\begin{equation*}
\begin{split}
& \Sigma(M(f)g) =  -\frac{\tau p }{2} (\nabla u + (\nabla u)^T -\frac{2}{3}\nabla
\cdot u I) + O(\eps) \\
& q(M(f)g) =  - \frac{\tau p }{3} \nabla T + O(\eps).
\end{split}
\end{equation*}

Now we use proposition~\ref{prop:approx_Sigma} and we come back to the
dimensional variables to get
\begin{equation*} 
\Sigma(f) = p I - \mu \bigl(\nabla u + (\nabla u)^T -\frac{2}{3}\nabla\cdot
u\bigr), \quad \text{and} \quad  q=-\kappa \nabla \cdot T,
\end{equation*}
with the following values of the viscosity and heat
transfer coefficients
\begin{equation*}
\mu = \frac{\tau p}{2(1-\nu)}, \quad \text{and} \quad 
\kappa = \frac{5}{6}\tau p R.
\end{equation*}
Using these relations into the conservation
laws~\eqref{eq-lois_cons_bis} proves that $\rho,\rho u$, and $E$ solve
the Navier-Stokes equations~(\ref{eq-ns}) up to $O(\eps^2)$ with
transport coefficients given by~(\ref{eq-coef}) and that the
Prandtl number is
\begin{equation*}
  \Pr = \frac{\mu}{\kappa}\frac{5R}{2} = \frac{3}{2(1-\nu)}.
\end{equation*}

\section{The ES-Fokker Planck model with a non constant $\nu$}
\label{sec:varnu}

This short section establishes how we deal with the Prandtl number in all cases.
 
\subsection{Some limits of the model with a constant $\nu$}

In the previous section, we have found that the Prandtl number
obtained for the ES-Fokker Planck model is
\begin{equation}\label{eq-Pr} 
  \Pr = \frac{\mu}{\kappa}\frac{5R}{2} = \frac{3}{2(1-\nu)},
\end{equation}
and can be adjusted to various values by choosing a corresponding value
of the parameter $\nu$.

Moreover, we have seen that the model is well defined (that is to say
that the tensor $\Pi$ is positive definite for every $f$) if, and only
if $-\frac{1}{2}< \nu < 1$. This last condition leads to the following limitations for the Prandtl number:
$$1<Pr<+\infty,$$
so that the correct Prandtl number for monoatomic gases (which is
equal to $\frac23$) cannot be obtained. In the next section, we show
that this analysis, which is based on the inequality~(\ref{positif2}) is too
restrictive, and that there is a simple way to adjust the correct
Prandtl number.

\subsection{Recovering the good Prandtl number} 

The previous analysis relies on inequality~(\ref{positif2}) of
proposition~\ref{prop:defpos} that does not take into account the
distribution $f$ itself: this inequality ensures the positive
definiteness of $\Pi$ independently of $f$. However,
proposition~\ref{prop:defpos} also indicates that $\Pi$ is positive
definite for a given $f$ if $\nu$ satisfies~(\ref{positif}). This
inequality is less restrictive, since it depends on $f$ via the
temperature $T$ and the extremal eigenvalues of $\Theta$.

Now, our first idea is that $\nu$ can be set to a non constant
value (it may depend on time and space): it just has to lie in the interval
$[-\frac{RT}{\lambda_{max}-RT},1]$, 
so that 
it satisfies the assumptions of proposition~\ref{prop:propD}.
The second idea is that the Prandtl number makes sense when the flow is
close to the equilibrium, that is to say when $f$ is close to its own
local Maxwellian $M(f)$: in such case, $\Theta = RT I + O(\eps)$, and
all the eigenvalues of $\Theta$ are close to $RT$ up to $O(\eps)$
terms, which implies $\lambda_{max} = RT + C\eps$
. Consequently, the value of $\nu$ now lies
in 
$[ -\frac{RT}{RC\eps}, 1]$ 
which shows that $\nu$
can take any arbitrary value between $-\infty$ and $1$ when
$\eps$ is small enough, and in particular the value $\nu=-\frac54$
that gives the correct Prandtl number $\Pr=\frac23$ can be used. 

In other words, by defining $\nu$ as a non constant value that
satisfies~(\ref{positif}) and is lower than 1, we can adjust the
correct Prandtl number provided that $\nu$ can be set to $-\frac54$
near the equilibrium regime.

This analysis suggests a very simple definition of $\nu$ : we propose
to use the smallest negative $\nu$ such that:
\begin{itemize}
\item  $\Pi$ remains strictly definite positive,
\item $\nu\geq -\frac{5}{4}$.
\end{itemize}
This leads to the following definition:
\begin{equation}  \label{eq-nu}
\nu=\max\left(-\frac54, -\frac{RT}{\lambda_{max}-RT}\right).
\end{equation}

\

Note that in most realistic  cases $\nu$ is equal to
$-\frac54$. Indeed, $\nu\neq -\frac54$ implies $\lambda_{max}>1.8 RT$
which can only happen in case of highly non equilibrium flow with
strong directional non isotropy: such cases are very specific and
are usually not observed in aerodynamical flows, for instance.

\section{Numerical tests}
\label{sec:num}

We have chosen to present a stochastic method to implement the model,
since it is well suited to diffusion operators~\cite{LBP} (this was
also used in~\cite{Jnny2010}). In this
paper we only present results in homogeneous cases to understand how
it works. Further studies  will be done in inhomogeneous cases and
comparisons with full Boltzmann equation and/or ESBGK model will be
provided in forthcoming paper. In this section we solve
\begin{equation}  \label{eq-es_fph}
\dt f = \frac{1}{\tau}\divv\bigl((v-u)f+{\Pi}\nabla_v f\bigr).
\end{equation}

Our goal is to show numerically that we are able to capture the
correct Prandtl number for monoatomic gases. This numerical
illustration is based on the following remarks. First, the density,
velocity, and temperature of $f$ are constant in time: this is due to
the conservation properties of the collision operator. Second, the
heat flux  $q$ satisfies
\begin{equation}
\frac d{dt} q=-\frac 3{\tau}q,
\end{equation}
so that 
\begin{equation} \label{eqqh}
q(t)=q(0)\exp\left(-\frac{3t}\tau\right). 
\end{equation}
Finally, the tensor $\Theta$ satisfies the relaxation equation
\begin{equation}\label{eq-edotheta} 
\frac d{dt} \Theta=\frac 1{\tau}2(1-\nu)\left(RTI -\Theta\right),
\end{equation}
which shows that it tends to $RTI$ for large times, and hence $\nu$
tends to the constant value $-5/4$ (see~(\ref{eq-nu})). Then for large times $t>s$, $\nu$
can be assumed to be constant, and~(\ref{eq-edotheta}) can be solved
to give
\begin{equation} \label{eqtetah}
\Theta(t)=\exp\left(-\frac{2(1-\nu)(t-s)}{\tau}\right)\Theta(s)+\left(1-\exp\left(-\frac{2(1-\nu)(t-s)}{\tau}\right)\right)RTI.
\end{equation}

Using equations ( \ref{eqtetah}) and (\ref{eqqh}), one gets 
\begin{equation}
\frac{\ln\left(\left|\Theta_{i,i}(t)-RT\right|\right)} {\ln(|q|}=\frac{2(1-\nu)}{3}=\frac{1}{Pr}
\end{equation}
for $i=1,2,3$, so that we are able to recover the Prandtl number by
looking at the long time values of $\Theta(t)$ and $q(t)$.

   \subsection{A stochastic numerical method}

   We use a DSMC method to solve the problem. For homogeneous cases,
   the probability density function is approximated with $N$ numerical
   particles so that
$$f(t,v)\simeq \alpha \sum_i^N \omega_i \delta_{V_i(t)},$$
where $\omega_i$ is the numerical weight of numerical particle $i$ and
$\delta_{V_i(t)}$ is the Dirac function at the particle velocity
$V_i(t)$. Moreover $\alpha$ is defined through the constant density
$\rho=\int_{\mathbb{R}^3}f(t,v)dv$ by
$$\rho=\alpha \sum_i^N \omega_i.$$

In the test cases presented here, all numerical weights $\omega_i$ are
equal, and we just have to define the dynamics of the numerical
particles. To solve diffusive problems, it is well-known that using
Brownian motion is a good way to proceed (see \cite{LBP}): the
corresponding stochastic ordinary differential equation is called the
Ornstein-Uhlenbeck process that reads
\begin{equation}
dV_i(t)=-\frac{1}{\tau }\left(V_i(t)-u\right)+AdB(t)
\end{equation}
for each $1\leq i\leq N$. The quantity $dB(t)$ is a three dimensional
Brownian process. The matrix $A$ has to satisfy $AA^T =
\Pi$. Several choices are available for $A$.  The obvious one would be
to use the square root of $\Pi$ (which is a positive definite matrix):
this requires to compute the eigenvectors of $\Pi$ and may lead to
expensive computations.  We find it simpler to use the Choleski
decomposition because of the simplicity of the algorithm.  For the
time discretization we use a backward Euler method. The complete
algorithm is the following:

\begin{enumerate}
\item Approximate the initial data $f(0,v)$ by $\sum_i^N \omega
  \delta_{V_i(t)}$, where $N$ is the number of numerical particles and
  $\omega$ a constant numerical weight. The velocities are chosen
  randomly according to the particle density function $f(0,.)$.
\item Compute the tensor $\Theta$
\item Compute the three real eigenvalues the positive definite tensor
  $\Theta$ with Cardan's formula.
\item Compute the Choleski factorization $\Pi=A^TA$ of $\Pi$.
\item For $i$ from 1 to $N$, advance the velocity $V_i^n$ through the
  process:
\begin{equation*}
  V_i^{n+1}=\left(1-\frac {\Delta t}
    \tau\right)(V_i^{n}-u)+\sqrt{\frac {2\Delta t}
    \tau}A\left(\begin{array}{c}
      B_1\\
      B_2\\
      B_3
\end{array}\right)
\end{equation*}
where $\Delta t=t^{n+1}-t^{n}$, $B_1,B_2,B_3$ are random numbers
chosen through a standard normal law. Since the scheme is explicit we
enforce $\frac{\Delta t}{\tau}\leq 0.1$ to ensure stability: this
leads to around $\frac{ T_f}{0.1 \times \tau} $
time steps to reach the final time $T_f$.

\end{enumerate}

The scheme we propose here preserves mass but does not preserve
momentum and energy, like most DSMC methods. However, these
quantities are preserved in a statistical way. Moreover at the end of
each time step, the distribution is renormalized to keep the mean
velocity constant and to decrease the error in $\Theta$.

   \subsection{Numerical results}
   \label{subsec:nr}
   We present two different test cases: one for which the correction
   of $\nu$ (section~\ref{sss1}) is not activated and one for
   which it is activated (section~\ref{sss2}). For both cases, the
   characteristic time $\tau$ is set to one, which is sufficient fore
   the illustration given here. We also set $R=1$: in other words, we
   work in non-dimensional variables here.

\subsubsection{First test case} \label{sss1}

We use $1$ million particles. We choose three independent laws for the
three components of velocity of the numerical particles:
\begin{itemize}
\item the first component is equal to $100 s^4-20$ where $s$ follows a
  uniform law between $[0,1]$,
\item the second and third components of the velocity follow a uniform
  law between $[-50,50]$,
\end{itemize}

The choice for the first component seems a little bit strange but we need
to have a non zero heat flux at the beginning of the computations
to be able to capture a characteristic time of variation for
$q$. However, we have chosen distributions whose variances are of
the same order so that the $\nu$ which is used all along this
computation is equal to $-\frac54$. The final time is set to 1.

As expected, we observe the convergence of the directional
temperatures $T_{i,i}$ (the diagonal elements of $\Theta$) towards the
temperature and the relaxation towards the Maxwellian (figure
\ref{f1}). Moreover, all along the computation, the correction of
$\nu$ is not activated and hence the Prandtl number defined
by~(\ref{eq-Pr}) is always equal to $\frac23$ (figure
\ref{f3}). Finally, the Prandtl number is computed by using linear
regression on the logarithm curves of $\Theta(t)$ and $q(t)$ (figure
\ref{f3}), see the discussion at the beginning of
section~\ref{sec:num}: we get a numerical Prandtl number $
Pr_n=\frac{2.8971}{4.5001}=0.6428$, which is close to $\frac23$.

\subsubsection{Second test case}  \label{sss2}

We use $100\,000$  particles. As before we choose three independent laws for the three components of velocity of the numerical particles:
\begin{itemize}
\item the first component is equal to $10\,000x^4-2000$ where $x$ follows a uniform law between $[0,1]$,
\item the second component of velocity follows a uniform law between $[-50,50]$,
\item the third     component of velocity follows a uniform law between $[-50,50]$.
\end{itemize}
We have only changed the first component in order to make active the
correction for $\nu$ by having most of the thermal agitation in one
direction. All the parameters are the same as before except the final
time which now is $0.5$ (we only want to see the change of regime for $\nu$).

\

At time $0.5$ we are not at equilibrium which explains why the
distribution of the first component of velocity is not yet a Gaussian
(figure \ref{f4}). We can still observe the exponential convergence
towards the mean temperature of the diagonal components (figure
\ref{f4}).  When one looks more precisely at the behavior of $\nu$ and
the Prandtl number, one can see that the correction of $\nu$ is
activated so that it varies from $-0.5$ to $-1.25$ and consequently
the Prandtl number varies from $1$ to $\frac23$ (figure
\ref{f5}). One may also see that while $\log |q|$ is still
linear , $\log |T_{11}-T|$ shows a slightly curved profile (figure
\ref{f5}). This is confirmed by the linear fitting of the logarithms
(figure \ref{f6}). However, the numerical Prandtl number computed with
the slope of this linearly fitted lines is then $
Pr_n=\frac{2.948}{4.1127}=0.717$ which is not so far from
$\frac23$. This shows that activating the correction of the
parameter $\nu$ gives correct results.

One may observe that getting $\Pr=1$ at the beginning of the
computation is not correct. However, it has to be noticed that this
happens because the flow is very far from equilibrium for small times:
in such a regime, the Prandtl number has no clear physical
interpretation, since the Chapman-Enskog is not valid, and hence the
transport coefficients are not defined.

   \section{Conclusion}
\label{sec:concl}

In this paper, we have proposed a modified Fokker-Planck model of the
Boltzmann equation that we call ES-FP model. This model satisfies the
properties of conservation of the Boltzmann equation, and has been
defined so as to allow for correct transport coefficients. This has
been illustrated by numerical tests for homogeneous
problems. Moreover, it has been proved that the ES-FP model satisfies
the H-theorem. 

The construction
and analysis of ES-FP models for more complex gases (like polyatomic
or multi-species), as well as
inhomogeneous numerical simulations, will be presented in a
forthcoming paper. Based on our proof of the H-theorem, we believe
this theorem should still be true for such extended models. 

\paragraph{ Acknowledgments.} 
 This study has been carried out in the frame of “the Investments for the future” Programme IdEx Bordeaux – CPU (ANR-10-IDEX-03-02).

 \bibliographystyle{plain}

 \bibliography{biblio}


\begin{figure}[h]
\begin{center}
\includegraphics[height=7cm,angle=270]{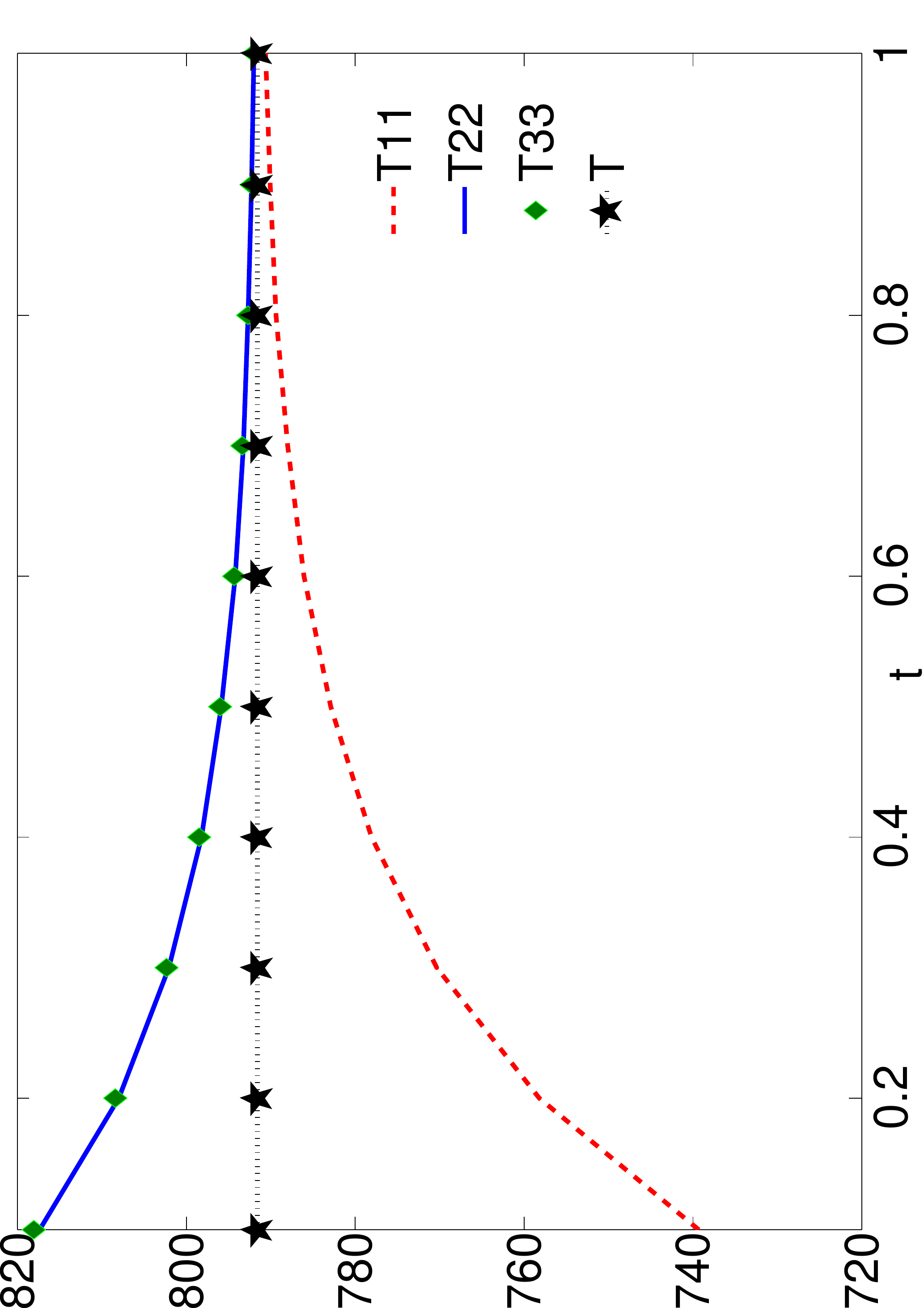}
\includegraphics[height=7cm,angle=270]{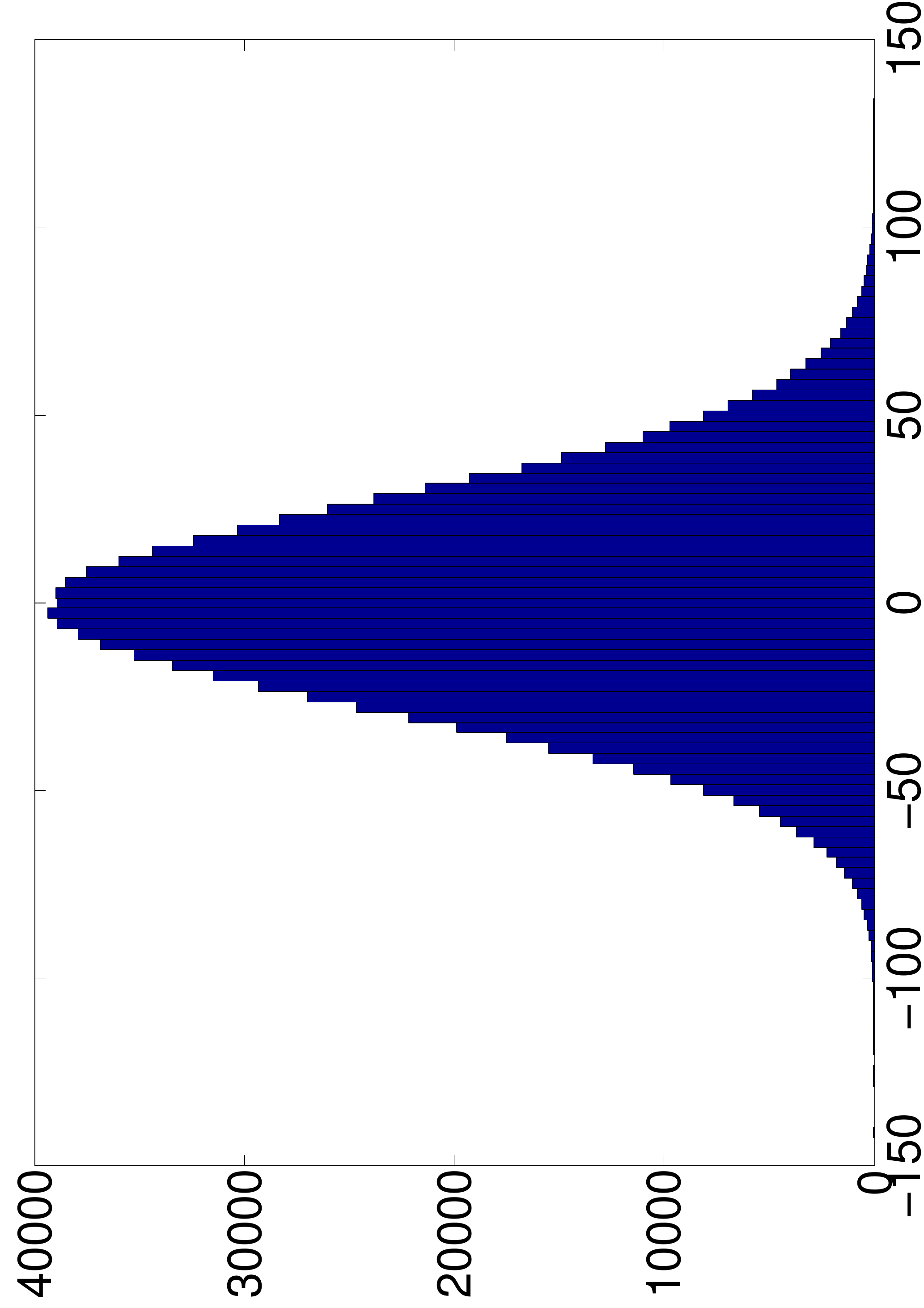}  
\caption{Left: time evolution of the diagonal components
  $T_{11},T_{22},T_{33}$ of the tensor $\Theta$ and of its trace
  $T$. Right: histogram of the first component of velocity at final time
  $t=1$.  \label{f1}}
\end{center}
\end{figure}

\begin{figure}[h]
\begin{center}
\includegraphics[height=7cm,angle=270]{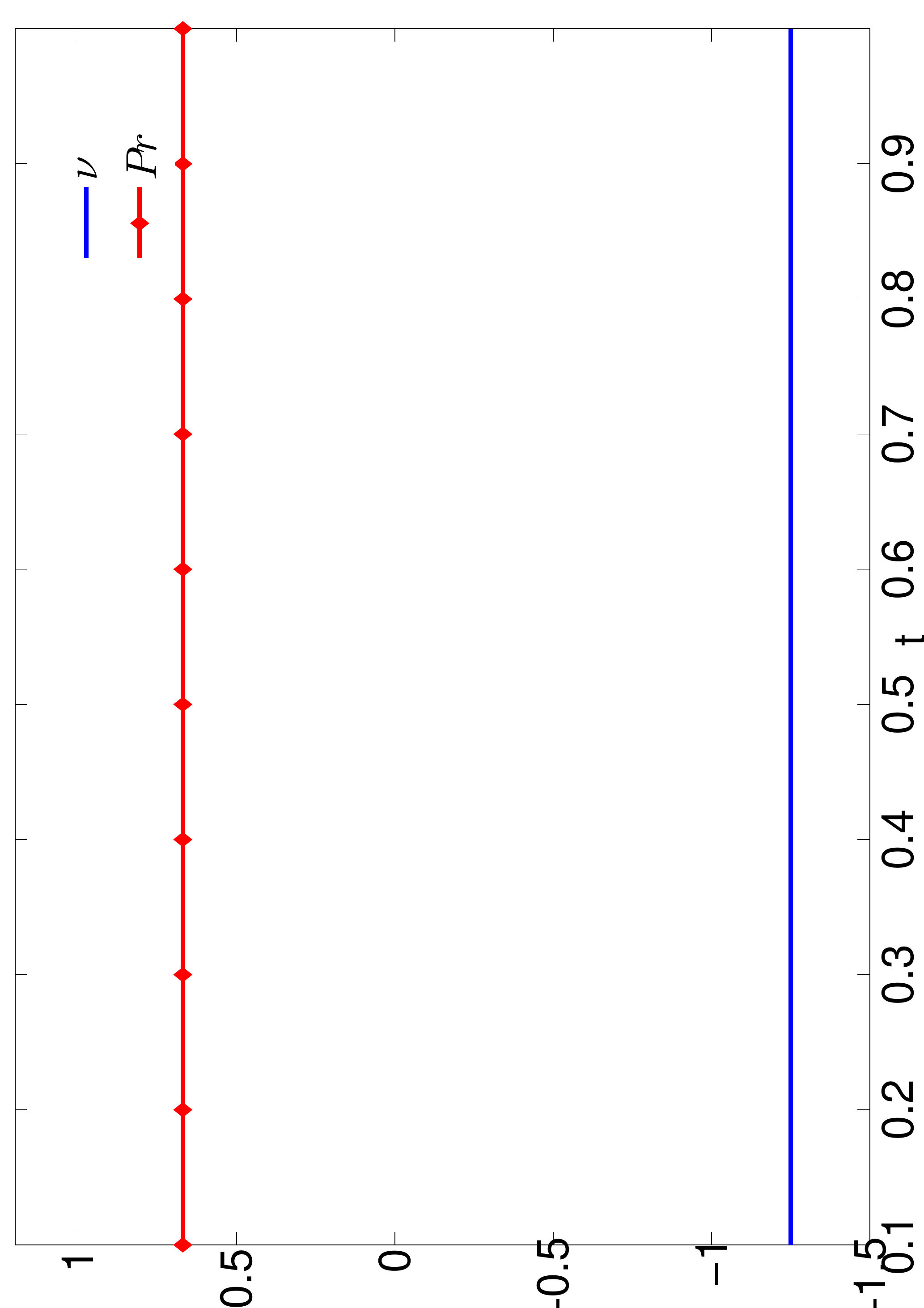} 
\includegraphics[height=7cm,angle=270]{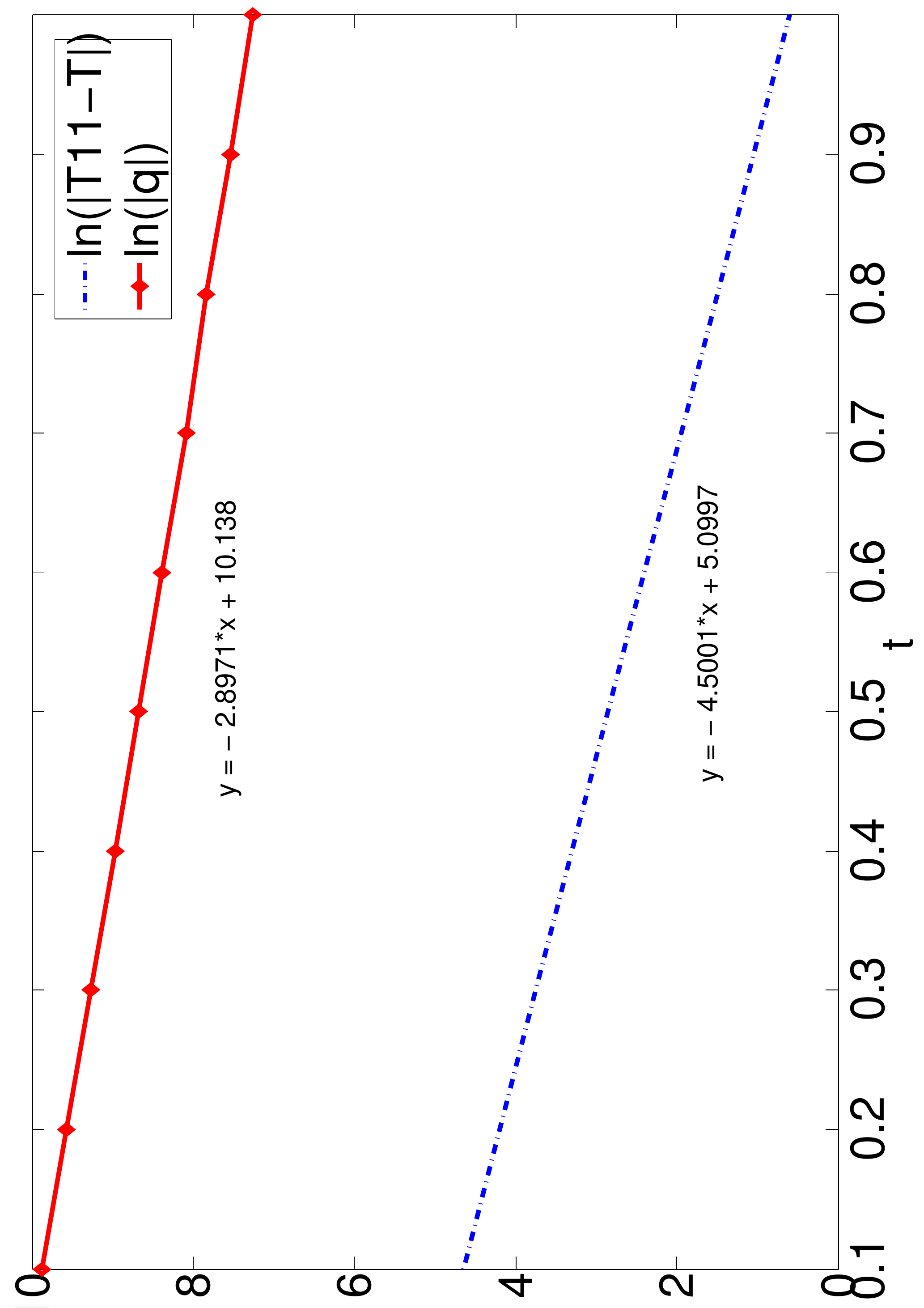}
\caption{Left: time evolution of $\nu$ (defined by~(\ref{eq-nu})) and
  $Pr$ (defined by~(\ref{eq-Pr})). Right: time history of $\log |T-T_{11}|$ and $\log
  |q|$.  \label{f3}}
\end{center}
\end{figure}

\clearpage

\begin{figure}[h]
\begin{center}
\includegraphics[height=7cm,angle=270]{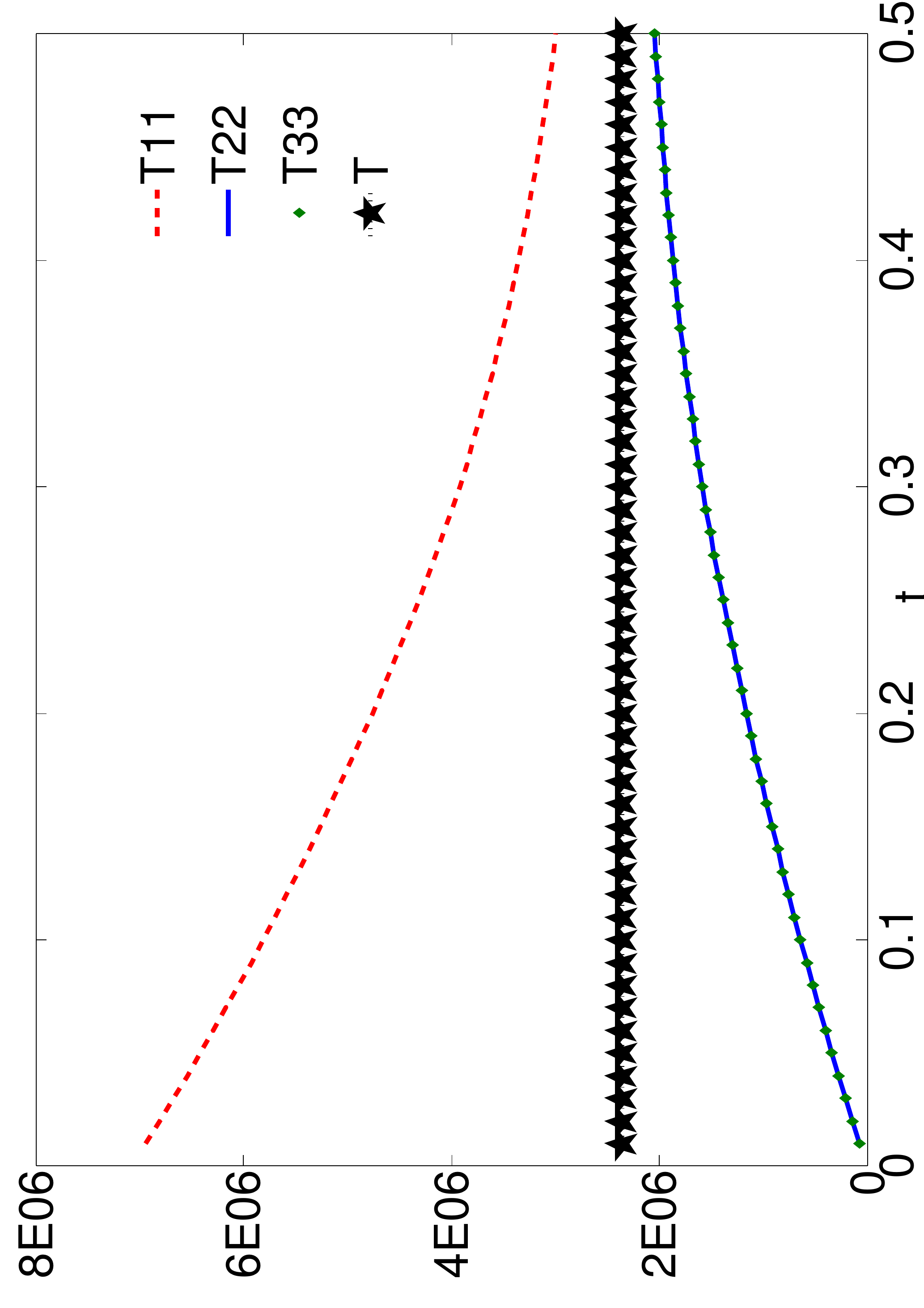}
\includegraphics[height=7cm,angle=270]{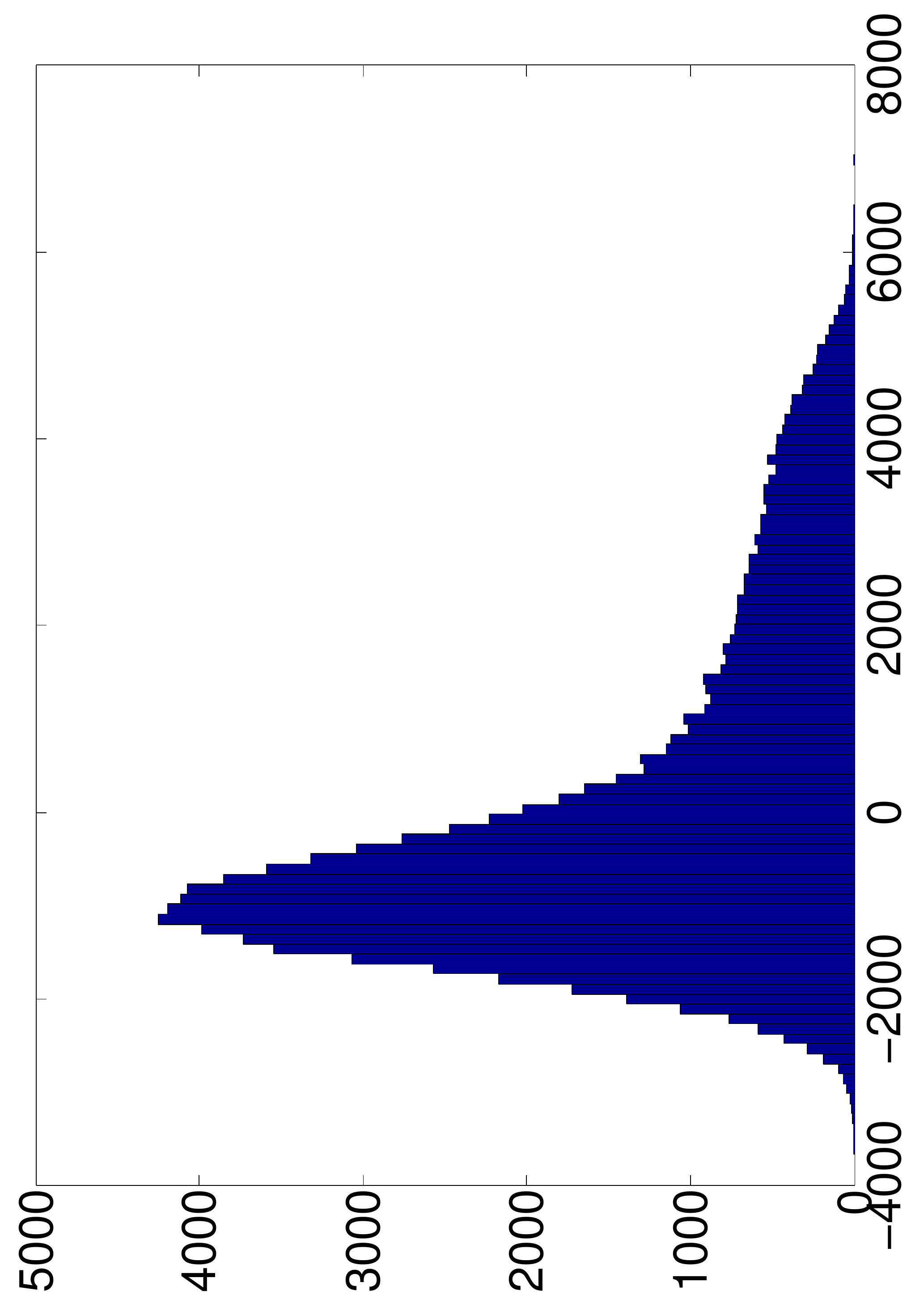}  
\caption{Left: time evolution of the diagonal components
  $T_{11},T_{22},T_{33}$ of the tensor $\Theta$ and of its trace
  $T$. Right: histogram of the first component of the velocity at time $t=0.5$.  \label{f4}}
\end{center}
\end{figure}

\begin{figure}[h]
\begin{center}
\includegraphics[height=8cm,angle=270]{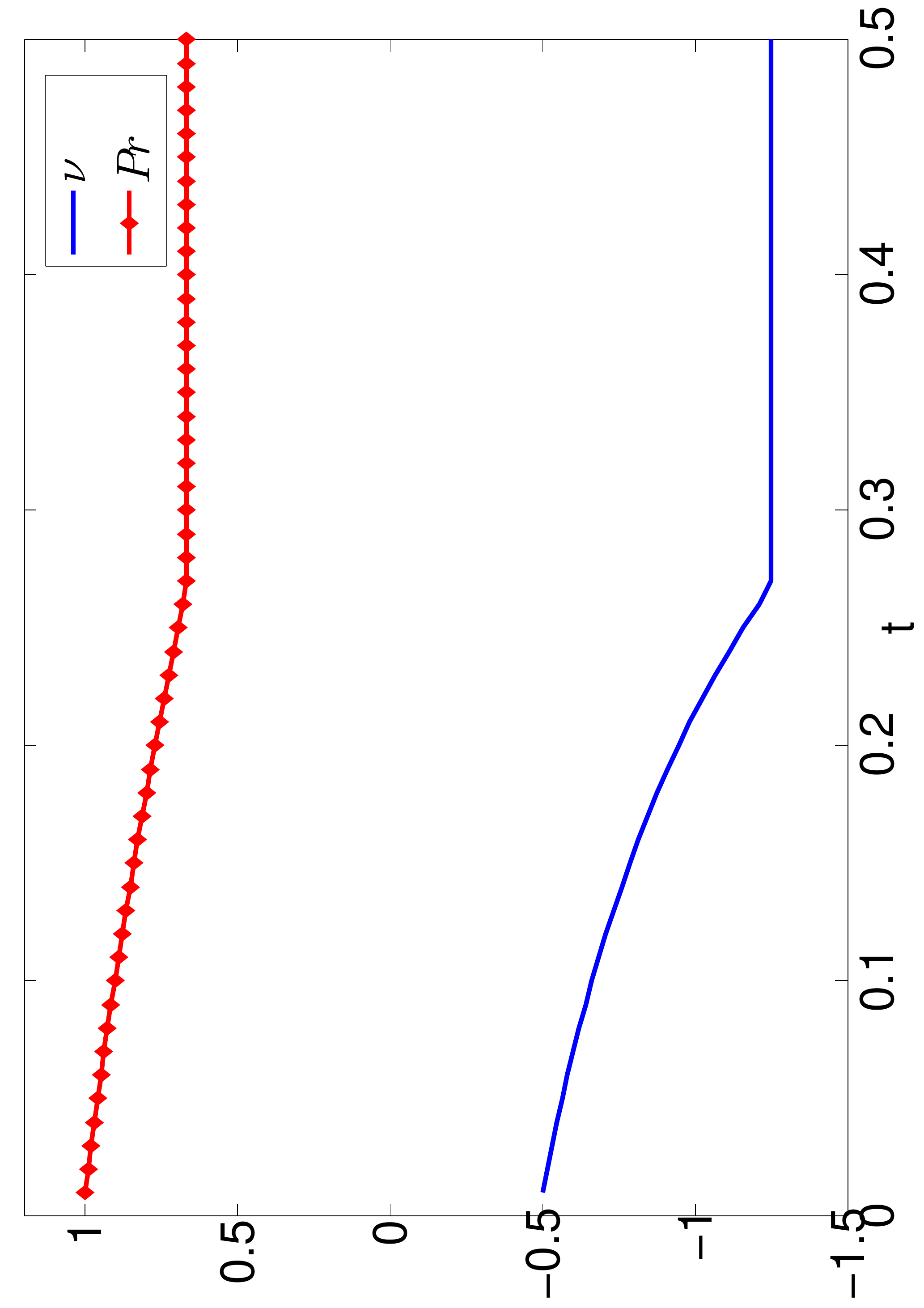} 
\includegraphics[height=8cm,angle=270]{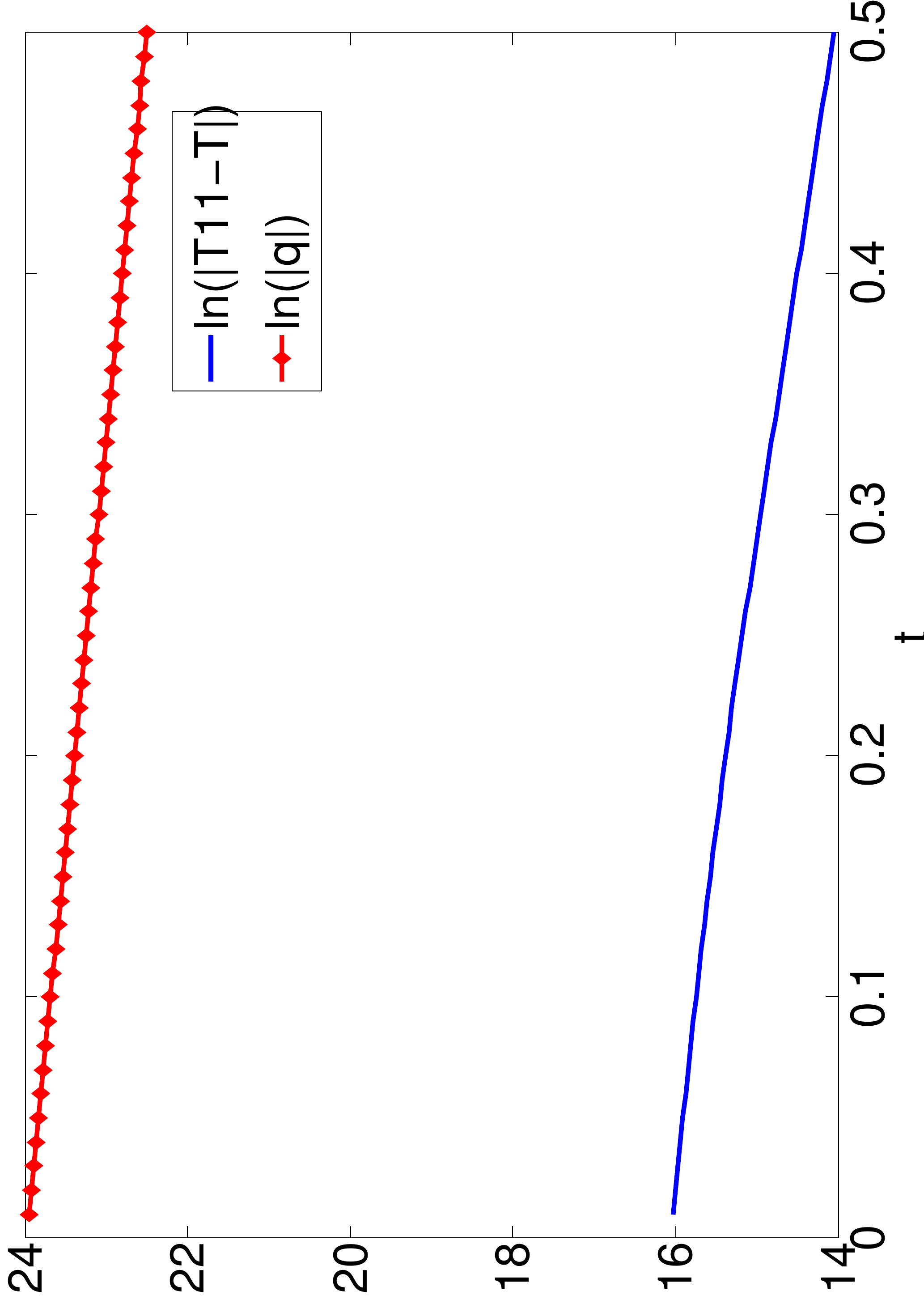}
\caption{Left: $\nu$ and $Pr$ along time. Right: convergence of the logarithms  of  $|T-T_{11}|$ and $q$  \label{f5}}
\end{center}
\end{figure}

\begin{figure}[h]
\begin{center}
\includegraphics[height=7cm,angle=270]{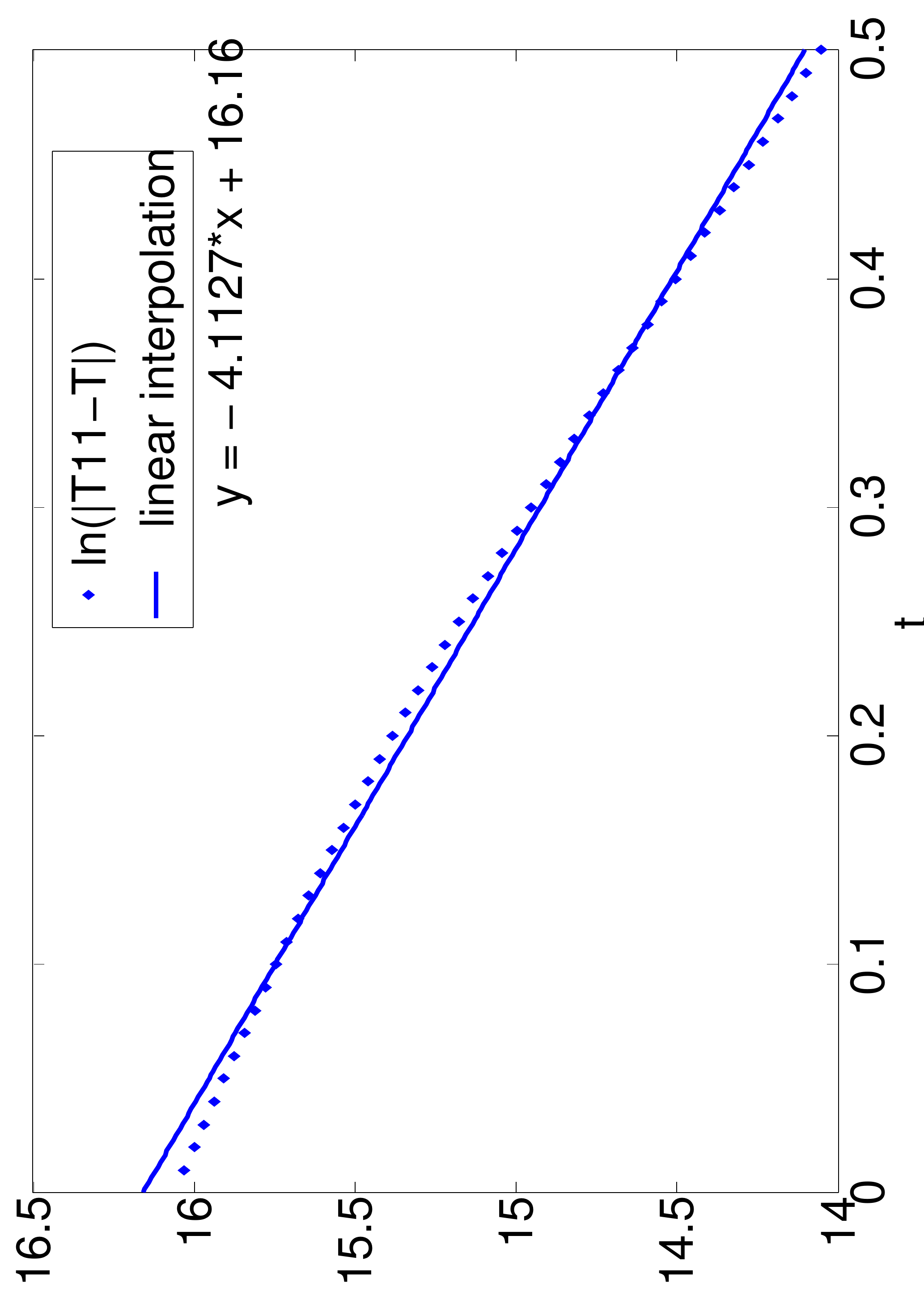}
\includegraphics[height=7cm,angle=270]{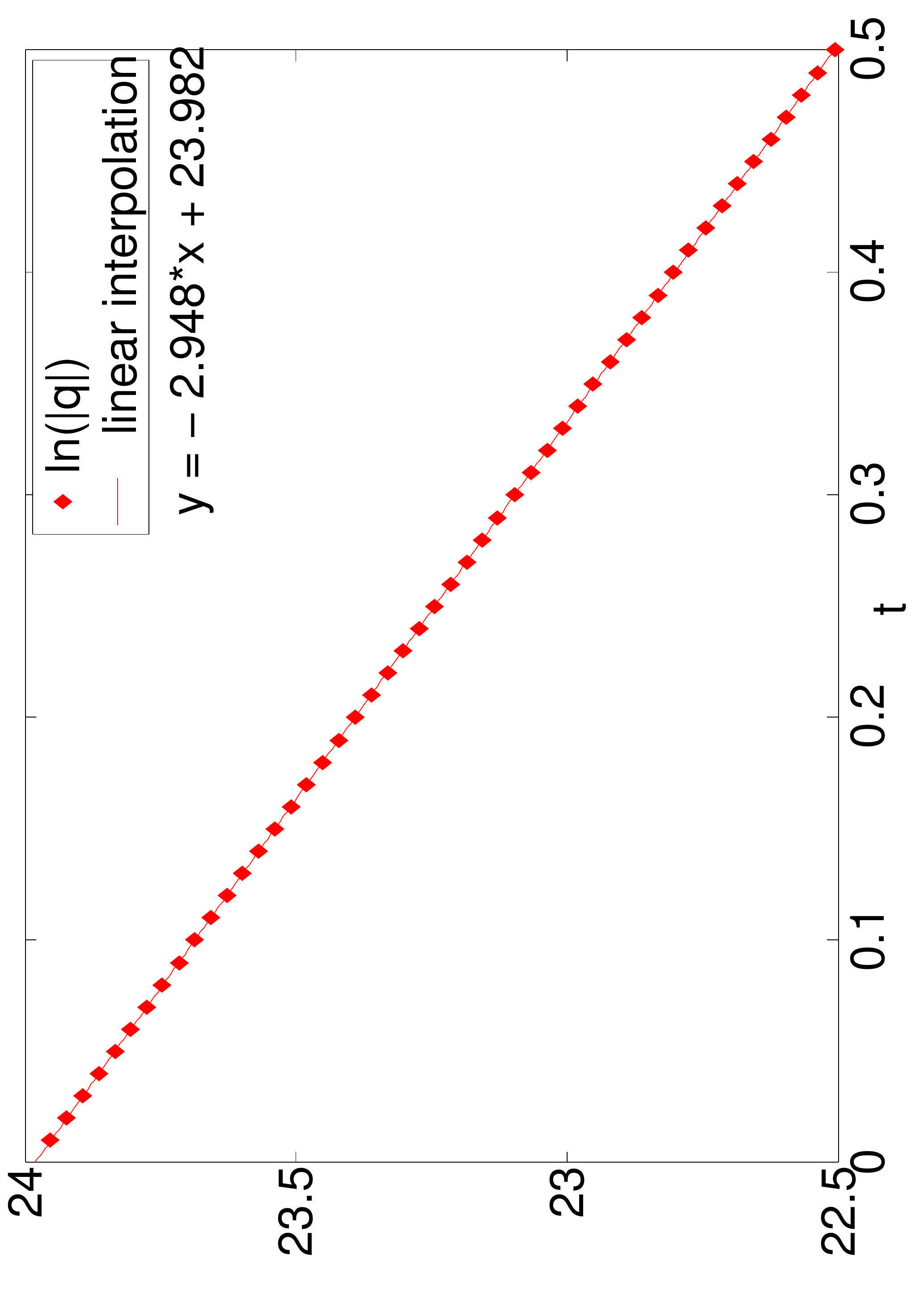}  
\caption{Left: comparison between the curve of the  logarithm  of
  $|T-T_{11}|$ ant its linear fitting. Right: comparison between the curve  of the logarithm  of  $|q|$ ant its linear fitting.  \label{f6}}

\end{center}
\end{figure}

\end{document}